\newtheorem{theorem}{Theorem}[section]
\newtheorem{corollary}[theorem]{Corollary}
\newtheorem{lemma}{Lemma}[section]
\theoremstyle{definition}
\newcommand{\dom}{{\mbox{\rm dom}}}
\newcommand{\lh}{{\mbox{\rm lh}}}
\newcommand{\id}{{\mbox{\rm id}}}
\DeclareSymbolFont{AMSb}{U}{msb}{m}{n}
\DeclareMathSymbol{\N}{\mathbin}{AMSb}{"4E}
\DeclareMathSymbol{\Z}{\mathbin}{AMSb}{"5A}
\DeclareMathSymbol{\R}{\mathbin}{AMSb}{"52}
\DeclareMathSymbol{\Q}{\mathbin}{AMSb}{"51}
\DeclareMathSymbol{\I}{\mathbin}{AMSb}{"49}
\DeclareMathSymbol{\C}{\mathbin}{AMSb}{"43}
\theoremstyle{definition}
\begin{document}
\title{Disjointness between Bounded Rank-One Transformations}
\author{Su Gao}
\address{Department of Mathematics\\ University of North Texas\\ 1155 Union Circle \#311430\\  Denton, TX 76203\\ USA}
\email{sgao@unt.edu}
\author{Aaron Hill}
\address{Department of Mathematics\\ University of Louisville\\ Louisville, KY 40292\\ USA}
\email{aaron.hill@louisville.edu}
\date{\today}
\subjclass[2010]{Primary 37A05, 37A35, 37B10; Secondary 28D05, 54H20}
\keywords{rank-one transformation, rank-one word, rank-one symbolic system, isomorphic, disjoint, commensurate, minimal self-joinings, totally ergodic, non-rigid}

\begin{abstract} In this paper some sufficient conditions are given for when two bounded rank-one transformations are isomorphic or disjoint. For commensurate, canonically bounded rank-one transformations, isomorphism and disjointness are completely determined by simple conditions in terms of their cutting and spacer parameters. We also obtain sufficient conditions for bounded rank-one transformations to have minimal self-joinings. As an application, we give a proof of Ryzhikov's theorem that totally ergodic, non-rigid, bounded rank-one transformations have minimal self-joinings of all orders.
\end{abstract}
\maketitle\thispagestyle{empty}

\section{Introduction}
The research in this paper is motivated by the observation of Foreman, Rudolph, and Weiss \cite{FRW}, based on King's Weak Closure Theorem for rank-one transformations \cite{King1}, that the isomorphism problem for rank-one transformations is a Borel equivalence relation. Our objective has been to identify a concrete algorithm to determine when two rank-one transformations are isomorphic.

The broader context of this research is the isomorphism problem in ergodic theory, originally posed by von Neumann, that asks how to determine when two (invertible) measure-preserving transformations are isomorphic. 

Recall that a {\em measure-preserving transformation} is an automorphism
of a standard Lebesgue space. Formally, it is a quadruple $(X, \mathcal{B}, \mu, T)$, where
$(X, \mathcal{B}, \mu)$ is a measure space isomorphic to the unit interval with the
Lebesgue measure on all Borel sets, and $T$ is a bijection from $X$ to $X$ such that $T$ and $T^{-1}$ are both $\mu$-measurable and preserve the measure $\mu$.
When the algebra of measurable sets is clear, we refer to the transformation $(X, \mathcal{B}, \mu,T)$ simply by $(X, \mu, T)$. 

Two measure-preserving transformations $(X, \mathcal{B}, \mu, T)$ and $(Y, \mathcal{C}, \nu, S)$ are {\em isomorphic} if there is a measure isomorphism $\varphi$  from $(X, \mathcal{B}, \mu)$ to $(Y, \mathcal{C}, \nu)$ such that $\varphi\circ T=S\circ \varphi$ a.e.

Halmos and von Neumann showed that two ergodic measure-preserving
transformations with pure point spectrum are isomorphic if and only if they have
the same spectrum. Ornstein's celebrated theorem states that two Bernoulli shifts are isomorphic if and only if they have the same entropy. These are successful answers to the isomorphism problem for subclasses of measure-preserving transformations. For each of them, there is a concrete algorithm, which can be carried out at least in theory, to determine when two given measure-preserving transformations are isomorphic.

Foreman, Rudolph, and Weiss \cite{FRW} showed that the isomorphism problem for ergodic
measure-preserving transformations is a complete analytic equivalence relation, and in particular not Borel. Intuitively,
this rules out the existence of a satisfactory answer to the original isomorphism problem of von Neumann. However, in the same paper they showed that the isomorphism relation becomes much simpler when restricted to the generic
class of rank-one transformations. Although their method does not yield a concrete algorithm for the isomorphism problem for rank-one transformations, it gives hope that the isomorphism problem has a satisfactory solution for a generic class of measure-preserving transformations. Since rank-one transformations are given by their cutting and spacer parameters $(r_n:n\in\N)$ and $(s_n: n\in\N)$ (more details are given in the next section), a satisfactory solution to the isomorphism problem would correspond to a simple algorithm that yields a yes or no answer with these parameters as input.

In this paper we make some progress toward such a satisfactory solution. Under the assumption that the cutting and spacer parameters are bounded, we investigate the isomorphism problem and yield some conditions to guarantee isomorphism and non-isomorphism. For the class of canonically bounded rank-one transformations, we are able to give a simple algorithm to determine isomorphism when the rank-one transformations are commensurate or just eventually commensurate (Corollary~\ref{CBmain} and Theorem~\ref{mainisogen}). The basic techniques of this investigation come from the recent \cite{Hill} by the second author.  

In addition to the isomorphism problem, we investigate in this paper a stronger notion of non-isomorphism, namely, that of disjointness between measure-preserving transformations. Two measure-preserving transformations $(X, \mu, T)$ and $(Y, \nu, S)$ are {\em disjoint} if $\mu\times\nu$ is the only measure on $X\times Y$ that is $T\times S$-invariant and has $\mu$ and $\nu$ as marginals. A main result of this paper (Theorem~\ref{maindisjoint}) gives a condition for when two bounded rank-one transformations are disjoint. For the class of canonically bounded rank-one transformations, we again yield a simple algorithm to determine when two commensurate transformations are disjoint (Corollary~\ref{CBmain}).

Our results on isomorphism and disjointness for canonically bounded rank-one transformations extend what was already known for a class of Chacon-like transformations.  Chacon's transformation is a prototypical example of canonically bounded rank-one transformations; it can be described by the cutting parameter that is constantly equal to 3 and the spacer parameter that is constantly equal to (0,1)--i.e., there are no spacers inserted at the first opportunity and a single spacer inserted at the second opportunity.  Given any sequence $e = (e_n : n \in \N)$ of 0s and 1s, we can build a Chacon-like transformation $T_e$ as follows.  The cutting parameter for the transformation will be constantly equal to 3 and the spacer parameter at stage $n$ will be $(0,1)$ if $e_n =1$ and $(1,0)$ if $e_n = 0$.  Fieldsteel \cite{Fieldsteel} showed that transformations $T_e$ and $T_{e^\prime}$ that are constructed in this way are isomorphic iff $e$ and $e^\prime$ eventually agree, i.e., there is some $N \in \N$ such that $e_n = e^\prime_n$ for all $n \geq N$.  It is an exercise in Rudolph's book \cite{RudolphBook} to show that in the case that $e$ and $e^\prime$ do not eventually agree, then $T_e$ and $T_{e^\prime}$ are in fact disjoint.  

The notion of canonically bounded rank-one transformations was defined in \cite{GaoHill} and was used in \cite{GaoHill1} to characterize non-rigidity for bounded rank-one transformations.  In our study of disjointness of commensurate, canonically bounded rank-one transformations, the basic method follows that of del Junco, Rahe, and Swanson \cite{delJuncoRaheSwanson} in which they showed that Chacon's transformation--in fact, any Chacon-like transformation--has minimal self-joinings of all orders.     Further exploration of the method gives us a generalization of the theorem of del Junco, Rahe, and Swanson (Theorem~\ref{mainMSJ}), which gives a general condition when bounded rank-one transformations have minimal self-joinings of all orders. Applying this general condition to canonically bounded rank-one transformations, and combining the results of \cite{GaoHill1}, we are able to obtain a proof of Ryzhikov's theorem \cite{Ryzhikov} that totally ergodic, non-rigid, bounded rank-one transformations have minimal self-joinings of all orders.

The rest of the paper is organized as follows. In Section 2 we provide further background and define the basic notions used throughout the paper. In Section 3 we state and prove the main results on non-isomorphism (Theorem~\ref{mainiso}) and disjointness (Theorem~\ref{maindisjoint}), and derive a satisfactory solution to the isomorphism and disjointness problems for commensurate, canonically bounded rank-one transformations (Corollary~\ref{CBmain}). In Section 4 we state the prove the main result on minimal self-joinings (Theorems~\ref{mainMSJ} and \ref{allorders}), and derive Ryzhikov's theorem (Corollary~\ref{Ryzh}) from our methods. In the final section, we give some concluding remarks and explain how the main results can be generalized to the broader context of eventually commensurate constructions.

\section{Preliminaries}

Throughout this paper we let $\N$ be the set of all natural numbers $0, 1, 2, \dots$. Let $\N_+$ be the set of all positive integers. Let $\Z$ be the set of all integers. 

\subsection{Finite sequences, finite functions, and finite words}
Let $\mathcal{S}$ be the set of all finite sequences of natural numbers. We will introduce some operations and relations on $\mathcal{S}$. We view each element of $\mathcal{S}$ from three different perspectives, that is, as a finite sequence, as a function with a finite domain, and as a finite word. For each $s\in S$, let $\lh(s)$ denote the {\em length} of $s$. Let $()$ denote the unique (empty) sequence with length $0$. A nonempty sequence in $\mathcal{S}$ is of the form $s=(a_1, \dots, a_n)$ where $n=\lh(s)$ and $a_1, \dots, a_n\in\N$. We also view $()$ as the unique (empty) function with the empty domain, and view each nonempty $s\in \mathcal{S}$ as a function from $\{1, \dots, \lh(s)\}$ to $\N$. In addition, we refer to each $s\in \mathcal{S}$ as a {\em word} of natural numbers. When $s\in S$, the different points of view give rise to different notation for $s$; for example, we have $s=(s(1), \dots, s(\lh(s)))=s(1)\dots s(\lh(s))$.

For $s\in\mathcal{S}$ and $k\leq l\in\dom(s)$ (i.e. $1\leq k\leq l\leq \lh(s)$), define
$s\!\upharpoonright\![k,l]$ to be the unique $t\in\mathcal{S}$ with $\lh(t)=l-k+1$ such that for $1\leq i\leq \lh(t)$, $t(i)=s(k+i-1)$. Also define $s\!\upharpoonright\! k=s\!\upharpoonright\![1,k]$ and $s\!\upharpoonright\!0=()$.

For $s, t\in \mathcal{S}$, $t$ is a {\em subword} of $s$ if there are $1\leq k\leq l\leq \lh(s)$ such that $t=s\!\upharpoonright\![k,l]$. When $t$ is a subword of $s$, we also say that $t$ {\em occurs} in $s$. If $t$ is a subword of $s$ and $1\leq k\leq \lh(s)$, then we say that there is an {\em occurrence of $t$ in $s$ at position $k$} if $t=s\!\upharpoonright\![k,k+\lh(t)-1]$.
We say that $t$ is an {\em initial segment} of $s$, denoted $t\sqsubseteq s$, if $t=s\!\upharpoonright\![1,\lh(t)]$. 

For $s_1,\dots, s_n\in \mathcal{S}$, we define the {\em concatenation} $s_1^\smallfrown\dots {}^\smallfrown s_n$ to be the unique word $t\in\mathcal{S}$ with length $\sum_{j=1}^n \lh(s_j)$ such that for all $1\leq j\leq n$,
$s_j=t\!\upharpoonright\!\left[1+\sum_{i=1}^{j-1}\lh(s_i), \sum_{i=1}^j \lh(s_i)\right]$.
 
For $s\in \mathcal{S}$, define $s^0=()$ and, if $n\in\mathbb{N}_+$, define $s^n$ to be the word $s_1^\smallfrown \dots^\smallfrown s_n$ where $s_1=\dots=s_n=s$.  Words of the form $s^n$, with $\lh(s)=1$, are called {\em constant}.

For $s, t\in\mathcal{S}$ of the same length, we say that $s$ and $t$ are {\em incompatible}, denoted $s\perp t$, if $t$ is not a subword of  $s^\smallfrown(c)^\smallfrown s$ for any $c\in\N$. It is easy to check that $\perp$ is a symmetric relation for words of the same length. We say that $s$ and $t$ are {\em compatible} if it is not the case that $s\perp t$.


Let $\mathcal{F}$ be the set of all binary words that start and end with $0$. Again, each element of $\mathcal{F}$ can be equivalently viewed as a finite sequence of 0s and 1s, as a finite function with codomain $\{0,1\}$, or most often, as a finite $0,1$-word. 

\subsection{Infinite and bi-infinite sequences}
We will consider infinite sequences of natural numbers as well as infinite binary sequences. Again, they will be equivalently viewed as sequences, functions, and infinite words. We tacitly assume that an infinite sequence has domain $\N$, unless explicitly specified otherwise. 

For an infinite word $V$ and natural numbers $k\leq l$, define $V\!\upharpoonright\![k,l]$ to be the unique $s\in \mathcal{S}$ such that $\lh(s)=l-k+1$ and for all $1\leq i\leq \lh(s)$, $s(i)=V(k+i-1)$. Also define $V\!\upharpoonright\!k=V\!\upharpoonright\![0,k]$. In the same fashion as for finite words, we may speak of when a finite word $s$ is a {\em subword} of $V$ or $s$ {\em occurs} in $V$, of there being an {\em occurrence of $s$ in $V$ at position $k$} for $k\in\N$, and of $s$ being an {\em initial segment} of $V$, which is denoted as $s\sqsubseteq V$.

If $v_0\sqsubseteq v_1\sqsubseteq \dots \sqsubseteq v_n\sqsubseteq \dots$
is an infinite sequence of elements of $\mathcal{S}$ each of which is an initial segment of the next, then there is a unique infinite sequence $V$ such that $v_n\sqsubseteq V$ for all $n\in\N$. We call this unique infinite sequence the {\em limit} of $(v_n: n\in\N)$ and denote it by $\lim_n v_n$. Specifically, for each $n\in\N$ and $1\leq i\leq \lh(v_n)$, $(\lim_n v_n)(i)=v_n(i+1)$. The infinite words we consider will arise as limits of such sequences of finite words.

A {\em bi-infinite} sequence (or word) is an element of $\N^\Z$. A {\em bi-infinite binary} sequence is an element of $\{0,1\}^\Z$. The relations of {\em subword} and {\em occurrence} can be defined similarly between finite words and bi-infinite words. With $\{0,1\}$ equipped with the discrete topology and $\{0,1\}^\Z$ equipped with the product topology, $\{0,1\}^\Z$ becomes a compact metric space. The {\em shift map} $\sigma$ on $\{0,1\}^Z$ is defined as
$$ \sigma(x)(i)=x(i+1) $$
for all $x\in\{0,1\}^\Z$ and $i\in\Z$. With $\{0,1\}$ equipped with any probability measure and $\{0,1\}^\Z$ equipped with the product measure, $\sigma$ is a measure-preserving automorphism on $\{0,1\}^\Z$. 

\subsection{Symbolic rank-one systems and rank-one transformations\label{s1}}

Both symbolic rank-one systems and rank-one transformations are constructed from the so-called cutting and spacer parameters $(r_n:n\in\N)$ and $(s_n:n\in\N)$. The {\em cutting parameter} $(r_n:n\in\N)$ is an infinite sequence of natural numbers with $r_n\geq 2$ for all $n\in\N$. The {\em spacer parameter} $(s_n:n\in\N)$ is a sequence of finite sequences of natural numbers with $\lh(s_n)=r_n-1$ for all $n\in\N$.

Given cutting and spacer parameters $(r_n:n\in\N)$ and $(s_n:n\in\N)$, a symbolic  rank-one system is defined as follows. First, inductively define an infinite sequence of finite binary words $(v_n:n\in\N)$ as
$$ v_0=0,\ \ v_{n+1}=v_n1^{s_n(1)}v_n\dots v_n1^{s_n(r_n-1)}v_n. $$
We call $(v_n:n\in\N)$ a {\em generating sequence}. 
Noting that each $v_n\in \mathcal{F}$ (that is, $v_n$ starts and ends with 0) and $v_n$ is an initial segment of $v_{n+1}$, we may define
$$ V=\lim_n v_n. $$
$V$ is said to be an {\em infinite rank-one word}. 
Finally, let
$$ X=X_V=\{ x\in \{0,1\}^\Z\,:\, \mbox{every finite subword of $x$ is a subword of $V$}\}. $$
Then $X$ is a closed subspace of $\{0,1\}^\Z$ invariant under the shift map $\sigma$, i.e., $\sigma(x)\in X$ for all $x\in X$. For simplicity we still write $\sigma$ for $\sigma\!\upharpoonright\! X$. We call $(X, \sigma)$ a {\em symbolic rank-one system}. 

With cutting and spacer parameters $(r_n:n\in\N)$ and $(s_n:n\in\N)$, one can also define a rank-one measure-preserving transformation $T$ by a cutting and stacking process as follows. First, inductively define an infinite sequence of natural numbers $(h_n: n\in\N)$ as
\begin{equation}\label{h} h_0=1,\ \ h_{n+1}=r_n h_n+\sum_{i=1}^{r_n-1} s_n(i). \end{equation}
Next, define sequences $(B_n:n\in\N)$, $(B_{n, i}: n\in\N, 1\leq i\leq r_n)$ and $(C_{n,i,j}: n\in\N, 1\leq i\leq r_n-1, 1\leq j\leq s_n(i))$, all of which are subsets of $[0,+\infty)$, by induction on $n$. Define $B_0=[0,1)$ and $B_{n+1}=B_{n,1}$ for all $n\in\N$. Let $B_n$ be given and inductively assume that $T^k[B_n]$ are defined for $0\leq k<h_n$ so that $T^k[B_n]$, $0\leq k<h_n$, are all disjoint. Let $\{B_{n,i}: n\in\N, 1\leq i\leq r_n\}$ be a partition of $B_n$ into $r_n$ many sets of equal measure and let $\{C_{n,i,j}: n\in\N, 1\leq i\leq r_n-1, 1\leq j\leq s_n(i)\}$ be disjoint sets each of which is disjoint from $B_n$ and has the same measure as $B_{n,1}$. Then define $T$ so that 
for all $1\leq i\leq r_n-1$, 
$$T^{h_n}[B_{n,i}]=\left\{\begin{array}{ll} C_{n,i,1} & \mbox{if $s_n(i)>0$} \\ B_{n,i+1} & \mbox{if $s_n(i)=0$;}\end{array}\right. $$
and for $1\leq j\leq s_n(i)$,
$$ T[C_{n,i,j}]=\left\{\begin{array}{ll} C_{n,i, j+1} &\mbox{if $1\leq j<s_n(i)$} \\
B_{n,i+1} & \mbox{if $j=s_n(i)$}.\end{array}\right. $$
We have thus defined $T^k[B_{n+1}]$ for $0\leq k<h_{n+1}$ so that all of them are disjoint. Finally, let
$$ Y=\bigcup\{ T^k[B_n]: n\in\N, 0\leq k<h_n\}. $$
Then $T$ is a measure-preserving automorphism of $Y$. If $Y$ has finite Lebesgue measure, or equivalently if 
\begin{equation}\label{eqn:a} \sum_{n=0}^\infty \displaystyle\frac{h_{n+1}-h_nr_n}{h_{n+1}}<+\infty, \end{equation}
then $(Y,\lambda)$, where $\lambda$ is the normalized Lebesgue measure on $Y$, is a probability Lebesgue space. Clearly $T$ is still a measure-preserving automorphism of $(Y, \lambda)$. Such a $T$ is called a {\em rank-one (measure-preserving) transformation}.

Connecting the symbolic and the geometric constructions, we can see that $h_n=\lh(v_n)$ for all $n\in\N$. When $(\ref{eqn:a})$ holds, there is a unique probability Borel measure $\mu$ on $X$, and $(X,\mu,\sigma)$ and $(Y,\lambda, T)$ are isomorphic measure-preserving transformations. In particular, the isomorphism type of $(Y, \lambda, T)$ does not depend on the numerous choices one has to make in the process to construct $T$ (e.g. how the sets $B_{n, i}$ and $C_{n,i,j}$ are picked and how $T$ is defined on them).

Throughout the rest of the paper we tacitly assume that (\ref{eqn:a}) is satisfied for all rank-one transformations under our consideration.

For more information on the basics of rank-one transformations, particularly on the connections between the symbolic and geometric constructions, c.f. \cite{Ferenczi} \cite{GaoHill0} and \cite{GaoHill}. 

\subsection{Canonical generating sequences}

The notion of the canonical generating sequence was developed in \cite{GaoHill1} in the study of topological conjugacy of symbolic rank-one systems. In \cite{GaoHill}, however, we used the notion to characterize non-rigid rank-one transformations among all bounded  rank-one transformations. We will use this notion later in this paper again.

For $u, v\in \mathcal{F}$ we say that $u$ is {\em built from} $v$, denoted $v\prec u$, if for some $n\geq 1$ there are $a_1, \dots, a_n\in\N$ such that
$$ u=v1^{a_1}v\dots v1^{a_n}v. $$
If in addition $a_1=\dots =a_n$, the we say that $u$ is {\em simply built from} $v$, and denote $v\prec_s u$. It is easy to see that $\prec$ is a transitive relation, and $\prec_s$ is not. If $V$ is an infinite word, we also say $V$ is {\em built from} $v$, and denote $v\prec V$, if there is an infinite sequence $(a_n: n\in\N_+)$ of natural numbers such that
$$ V=v1^{a_1}v\dots v1^{a_n}v\dots. $$
Similarly, we say that $V$ is {\em simply built from} $v$ if $a_1=\dots=a_n=\dots$. We say that $V$ is {\em non-degenerate} if $V$ is not simply built from any word in $\mathcal{F}$.

Let $V$ be an infinite rank-one word. As in Subsection~\ref{s1} a generating sequence for $V$, $(v_n:n\in\N)$, is a sequence of elements of $\mathcal{F}$ such that $v_0=0$,  $v_n\prec v_{n+1}$ for all $n\in\N$, and $V=\lim_n v_n$. If follows that $v_n\prec V$ for each $n\in\N$. In general, if $v\in\mathcal{F}$ and $v\prec V$, then there is a unique way to express $V$ as an infinite concatenation of $v$ as above, and in this case the demonstrated occurrence of $v$ are called the {\em expected occurrences} of $v$.

The {\em canonical generating sequence} of $V$ is a sequence enumerating in increasing $\prec$-order the set of all $v\in\mathcal{F}$ such that there do not exist $u,w \in \mathcal{F}$ satisfying $u \prec v \prec w \prec V$ and $u \prec_s w$.  In \cite{GaoHill1} it was shown that, if $V$ is non-degenerate, the canonical generating sequence is infinite.


Throughout the rest of this paper we consider only non-degenerate infinite rank-one words. There exist cutting and spacer parameters correspondent to each generating sequence. Thus we may speak of the {\em canonical cutting and spacer parameters} given any non-degenerate infinite rank-one word. A rank-one transformation $T$ is {\em bounded} if some cutting and spacer parameters $(r_n: n\in\N)$ and $(s_n:n\in\N)$ giving rise to $T$ are bounded, i.e., there is $B>0$ such that for all $n\in\N$ and $1\leq i\leq r_n-1$, $r_n<B$ and $s_n(i)<B$. Similarly, $T$ is {\em canonically bounded} if some canonical cutting and spacer parameters giving rise to $T$ are bounded. A canonically bounded rank-one transformation is necessarily bounded, but the converse is not true. The following theorem characterizes exactly which bounded rank-one transformations are canonically bounded. 

\begin{theorem}[\cite{GaoHill}]\label{TC} Let $T$ be a bounded rank-one transformation. Then $T$ is non-rigid, i.e. $T$ has trivial centralizer, if and only if $T$ is canonically bounded.
\end{theorem}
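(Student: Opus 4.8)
The plan is to exploit King's Weak Closure Theorem \cite{King1}, which for a rank-one transformation identifies the centralizer $C(T)$ with the weak closure of the powers $\{T^n : n \in \Z\}$. This identification is what links the two phrasings in the statement: $T$ has trivial centralizer exactly when $\{T^n\}$ is already weakly closed, and (since such $T$ is automatically aperiodic) this fails precisely when $\id$ is a weak limit of powers $T^{n_k}$ with $n_k \to \pm\infty$, i.e. when $T$ is rigid. Thus it suffices to prove that $T$ is canonically bounded if and only if $\{T^n\}$ is weakly closed, and the heart of the matter is to translate the combinatorial data of the canonical generating sequence---specifically the presence or absence of arbitrarily long runs of simple building $\prec_s$---into this dynamical dichotomy. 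The two directions are handled separately.

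For the direction that a bounded but \emph{not} canonically bounded $T$ is rigid (and has nontrivial centralizer), I would first argue that unbounded canonical parameters force arbitrarily long simply-built runs. Since some generating sequence is bounded while the canonical one is not, the very definition of the canonical sequence (which discards every $v$ admitting $u \prec v \prec w \prec V$ with $u \prec_s w$) forces the existence of words $u \prec_s w \prec V$ with $w = u 1^a u \cdots u 1^a u$ consisting of arbitrarily many copies of $u$ with a fixed spacer exponent $a$. Such a long simply-built block is, in the cutting-and-stacking picture, an approximately periodic tower: the map $T^{\lh(u)+a}$ carries all but a vanishing proportion of the Rokhlin tower over $u$ to itself. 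Letting the number of copies tend to infinity yields a sequence $n_k \to \infty$ with $T^{n_k} \to \id$ weakly, establishing rigidity; a standard refinement of the same overlap (shifting by one additional copy of $u$) produces a nontrivial weak limit that is not a power of $T$, giving the nontrivial centralizer.

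For the converse---that canonically bounded $T$ has trivial centralizer, hence is non-rigid---the key is to convert boundedness of the canonical parameters into a uniform aperiodicity at every scale. Because no word of the canonical sequence can belong to a long simply-built run, each canonical $v_{n+1}$ is built from $v_n$ with genuine variation among its spacer exponents $a_1, \dots, a_{r_n-1}$, and I would show that this variation is exactly what makes $v_n$ incompatible, in the sense of the relation $\perp$, with the words obtained by overlapping two of its consecutive expected occurrences, so that $v_n$ has no anomalous, shifted occurrences in $V$. The boundedness hypothesis makes this incompatibility \emph{uniform}: there is a fixed $\varepsilon > 0$ such that whenever $n \neq m$ with $|n|,|m|$ small relative to $h_\ell$, the powers $T^n$ and $T^m$ remain at weak distance at least $\varepsilon$ on the level sets of the $\ell$-th tower. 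Consequently any weak limit of powers must agree, at every level, with a single fixed power, so the weak closure of $\{T^n\}$ adds nothing and $C(T) = \{T^n : n \in \Z\}$.

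I expect the main obstacle to be the uniform-incompatibility step in the converse direction. Translating ``no long simply-built run'' into a quantitative lower bound on overlap discrepancies requires a careful synchronization, or unique-readability, lemma for the canonical generating sequence: one must control all the ways a copy of $v_n$ can occur in $V$ shifted off its expected occurrences and show that bounded canonical parameters cap the length of any such anomalous overlap. This combinatorial core---bounding misaligned occurrences of the canonical words via the $\perp$ relation---is where boundedness of the \emph{canonical} (as opposed to arbitrary) parameters is genuinely used, and it is the delicate part of the proof.
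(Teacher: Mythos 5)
You should know at the outset that the paper itself contains no proof of this statement: Theorem~\ref{TC} is imported by citation from \cite{GaoHill} (the characterization of non-rigidity is attributed in the text to \cite{GaoHill1}), so there is no internal argument to compare yours against, and your proposal must be judged on its own. Its skeleton is right: King's Weak Closure Theorem does identify the centralizer with the weak closure of the powers, so ``trivial centralizer'' and ``non-rigid'' coincide for rank-one maps, and the substance of the theorem is indeed the equivalence between rigidity and the existence of arbitrarily long simply built blocks $u \prec_s w \prec V$. The dynamical halves of your sketch (long simple blocks give a rigidity sequence; uniform separation of powers gives a weakly closed power group) are standard. The problem is that both combinatorial implications are asserted rather than proved, and neither is definitional.

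The first gap is your claim that failure of canonical boundedness forces, ``by the very definition of the canonical sequence,'' words $u \prec_s w \prec V$ with arbitrarily many copies of $u$. It does not. The definition only says that each word $v$ lying strictly between consecutive canonical words $v'_k \prec v'_{k+1}$ admits \emph{some} sandwich $u \prec v \prec w$ with $u \prec_s w$, and these sandwiches may individually involve only a bounded number of copies. To extract a long simple block you need genuine lemmas: (i) the linear ordering under $\prec$ of all words built into $V$, which forces every such sandwich to lie inside the interval $[v'_k, v'_{k+1}]$ (otherwise a canonical word would itself be sandwiched, contradicting canonicity); (ii) a merging lemma --- if $z_1 \prec_s z_2$ and $z_3 \prec_s z_4$ with $z_1 \prec z_3 \prec z_2 \prec z_4$, then $z_1 \prec_s z_4$ with the same spacer exponent, by uniqueness of expected occurrences --- which lets you chain the overlapping sandwiches covering $(v'_k, v'_{k+1})$ into the single relation $v'_k \prec_s v'_{k+1}$; and (iii) boundedness of the \emph{given} parameters, both to see that the canonical spacer values are automatically bounded (so non-canonical-boundedness really means unbounded canonical cutting values) and to rule out the degenerate case that $(v'_k, v'_{k+1})$ contains no intermediate words. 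The conclusion you want is true, but this chain of reasoning is the actual content, and none of it is in your sketch. The second and more serious gap is the converse direction: your ``uniform incompatibility'' step --- a fixed $\varepsilon>0$ separating distinct powers weakly, obtained by controlling all misaligned occurrences of the canonical words --- is precisely the theorem to be proved there, and you acknowledge you do not have it. As it stands, the proposal is a plausible road map consistent in spirit with how \cite{GaoHill1} proceeds, but it is not a proof.
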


\subsection{Replacement schemes and topological conjugacy}
Given infinite rank-one words $V$ and $W$, a {\em replacement scheme} is a pair $(v, w)$ of elements of $\mathcal{F}$, such that $v\prec V$, $w\prec W$, and for all $k\in\N$, there is an expected occurrence of $v$ in $V$ at position $k$ if and only if there is an expected occurrence of $w$ in $W$ at position $k$. This notion is closely related to the topological conjugacy between symbolic rank-one systems. 

In fact, if $v\prec V$, then every $x\in X_V$ can be uniquely expressed as
$$ x=\dots v1^{a_{-i}}v\dots v1^{a_0}v\dots v1^{a_i}v\dots $$
for $\dots a_{-i}, \dots, a_0, \dots, a_i,\dots \in \N$. We say that $x$ is {\em built from} $v$. The demonstrated occurrences of $v$ are again said to be {\em expected}. When $(v, w)$ is a replacement scheme for $V$ and $W$, we may define a map $\phi: X_V\to X_W$ so that 
$$ \phi(x)=\dots w1^{b_{-i}}w\dots w1^{b_0}w\dots w1^{b_i}w\dots $$
i.e., $\phi(x)$ is built from $w$, and so that for all $k\in\Z$, there is an expected occurrence of $v$ in $x$ at position $k$ if and only if there is an expected occurrence of $w$ in $\phi(x)$ at position $k$. Intuitively, $\phi(x)$ is obtained from $x$ by replacing every expected occurrence of $v$ in $x$ by $w$, adding or deleting 1s as necessary. It is easy to see that $\phi$ is a topological conjugacy between $X_V$ and $X_W$. We showed in \cite{GaoHill1} that all topological conjugacies essentially arise this way.

\begin{theorem}[\cite{GaoHill1}] Let $V$ and $W$ be non-degenerate infinite rank-one words. Then $(X_V, \sigma)$ and $(X_W, \sigma)$ are topologically conjugate if and only if there exists a replacement scheme for $V$ and $W$.
\end{theorem}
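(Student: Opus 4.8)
The plan is to treat the two implications separately, with the bulk of the effort in the reverse direction. For the ``if'' direction, suppose $(v,w)$ is a replacement scheme for $V$ and $W$ and let $\phi:X_V\to X_W$ be the map described immediately before the statement. I would verify the four properties that make $\phi$ a topological conjugacy. Well-definedness is clear once one knows that each $x\in X_V$ has a \emph{unique} decomposition into expected occurrences of $v$ separated by blocks of $1$s, which holds because $v\prec V$. Continuity follows from the recognizability of expected occurrences: there is an $M$ so that whether position $j$ begins an expected occurrence of $v$ in $x$ is decided by $x\!\upharpoonright\![j-M,j+M]$, whence $\phi(x)(k)$ depends on only boundedly many coordinates of $x$. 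Shift-equivariance $\phi\circ\sigma=\sigma\circ\phi$ is immediate, since shifting $x$ shifts every expected occurrence of $v$ by one and the construction of $\phi$ is matched by position. Finally, the replacement-scheme condition is symmetric in $(v,w)$, so $(w,v)$ is a replacement scheme for $W$ and $V$ and produces a map $\psi:X_W\to X_V$; tracking expected occurrences shows $\psi=\phi^{-1}$. As $\phi$ is then a continuous shift-equivariant bijection of compact spaces with continuous inverse, it is a topological conjugacy.

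For the ``only if'' direction, suppose $\phi:X_V\to X_W$ is a topological conjugacy. By the Curtis--Hedlund--Lyndon theorem both $\phi$ and $\phi^{-1}$ are sliding block codes; fix a radius $N$ valid for both. The idea is to extract the pair $(v,w)$ from a sufficiently deep level of the generating hierarchy of $V$. Concretely, I would take $v=v_n\prec V$ from a generating sequence with $\lh(v_n)$ large compared to $N$, so that the coding window can see the interior of an expected $v_n$-block without interference from the neighboring blocks, and then define $w$ to be the finite word that $\phi$ produces on the interior of any expected occurrence of $v_n$. The goal is to show that this $w$ lies in $\mathcal{F}$, that $w\prec W$, and that the expected occurrences of $w$ in $W$ sit at exactly the positions where $V$ has expected occurrences of $v$; by definition this is precisely a replacement scheme.

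The main obstacle will be establishing the uniformity of $w$ together with the exact position-matching. Because $\phi$ is only a sliding block code, its output near the boundary between an expected $v_n$-block and the adjoining run of $1$s depends on both sides, so I must confine this ambiguity to a bounded boundary region and argue that, after enlarging $n$ and absorbing a fixed global shift, the interiors of \emph{all} expected $v_n$-blocks are coded to the same $w$ and the block boundaries in $W$ align with those in $V$. Here the rank-one structure is decisive: the self-similar decomposition of $V$ forces the code to behave identically across occurrences, and a pigeonhole argument over the finitely many local patterns upgrades ``consistent on each block'' to ``a single pair $(v,w)$ valid throughout.'' One also uses that $\phi^{-1}$ is a sliding block code to rule out the degenerate possibility that distinct expected $v$-blocks collapse, thereby pinning down the positions. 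Once the position-matching is verified, $(v,w)$ is a replacement scheme and the proof is complete.
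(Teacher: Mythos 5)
A preliminary remark on the comparison itself: the paper does not prove this theorem --- it is imported verbatim from \cite{GaoHill1}, and the only argument present in this paper is the informal remark preceding the statement that the replacement map $\phi$ is ``easy to see'' to be a topological conjugacy. Your ``if'' direction follows that same sketch, but the steps you treat as routine conceal the first genuine gap. The uniqueness of the decomposition of $x\in X_V$ into expected occurrences of $v$ does \emph{not} ``hold because $v\prec V$'': if $V$ is simply built from $v$ (the degenerate case the hypotheses exclude), a point such as the periodic point of $X_V$ admits several distinct decompositions, so uniqueness fails. Likewise your continuity argument rests on an asserted recognizability lemma --- a radius $M$ such that $x\!\upharpoonright\![j-M,j+M]$ decides whether an expected occurrence of $v$ begins at $j$ --- which is a substantive fact, not an observation; it is precisely where non-degeneracy of $V$ must be used, and your proposal never invokes non-degeneracy anywhere. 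Two further unaddressed points: unambiguity of the formula for $\phi(x)$ does not by itself show $\phi(x)\in X_W$, and when the spacer parameter is unbounded, $X_V$ contains points with one-sided or bi-infinite runs of $1$s that are not built from $v$ in the stated sense at all, so even defining $\phi$ on all of $X_V$ requires care.

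The deeper gap is in the ``only if'' direction, which is the actual content of the theorem (the subject of the cited paper). Your plan --- Curtis--Hedlund--Lyndon, take $v=v_n$ with $\lh(v_n)$ large relative to the coding radius, read off $w$ from the image of a block, control boundaries --- is the natural first attack, but the two steps you defer are exactly where the proof lives, and no mechanism is offered for either. First, upgrading ``each expected $v_n$-block is coded consistently'' to ``a single word $w\in\mathcal{F}$ with $w\prec W$'' is not a finite pigeonhole over local patterns: the images of distinct blocks agree only up to boundary effects, and one must show the trimmed word both lies in $\mathcal{F}$ and is a word from which $W$ itself (a one-sided word, not a point of $X_W$) is built. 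Second, the definition of replacement scheme demands that expected occurrences of $v$ in $V$ and of $w$ in $W$ sit at \emph{exactly} the same positions; since $\sigma^k\circ\phi$ is a topological conjugacy for every $k$, no choice of $n$ or of coding window can produce this alignment automatically. ``Absorbing a fixed global shift'' is the whole difficulty: one must re-choose the pair $(v,w)$ --- moving cut points through spacer blocks, possibly replacing $v_n$ by a word built from it --- and prove the offsets can be made to vanish coherently for all occurrences simultaneously. As it stands, the proposal is a plausible outline whose decisive steps are missing, and the absence of any appeal to non-degeneracy is the clearest symptom that the hard part has not been engaged.
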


For the subject of this paper it is important to note that $\phi$ is also a measure-preserving isomorphism. This follows from the unique ergodicity of symbolic rank-one systems. Thus the existence of replacement schemes is a sufficient condition for two symbolic rank-one systems to be isomorphic.

We say that the two pairs of cutting and spacer parameters, $(r_n: n\in\N), (s_n:n\in\N)$ and $(q_n:n\in\N), (t_n:n\in\N)$, are {\em commensurate} if for all $n\in\N$, $r_n=q_n$ and $\sum_{i=1}^{r_n-1}\lh(s_i)=\sum_{i=1}^{r_n-1}\lh(t_i)$.

In the case of commensurate parameters, there is a straightforward way to identify replacement schemes and therefore it is easy to determine topological conjugacy.

\begin{corollary}\label{topiso} Let $(r_n:n\in\N)$ and $(s_n:n\in\N)$ be cutting and spacer parameters giving rise to non-degenerate infinite rank-one word $V$. Let $(r_n:n\in\N)$ and $(t_n:n\in\N)$ be cutting and spacer parameters giving rise to non-degenerate infinite rank-one word $W$. Suppose the two sets of parameters are commensurate. Then $(X_V,\sigma)$ and $(X_W, \sigma)$ are topologically conjugate if and only if there is $N\in\N$ such that for all $n\geq N$, $s_n=t_n$.
\end{corollary}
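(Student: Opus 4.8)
The plan is to invoke the replacement-scheme theorem to reduce the statement to a purely combinatorial assertion about the two generating sequences $(v_n)$ and $(w_n)$ determined by the given parameters, and then to exploit commensurateness. First I would record the consequences of commensurateness: since the two parameter sets share the cutting parameter and have equal spacer sums at every level, one has $\lh(v_n)=\lh(w_n)=h_n$ for all $n$, the words $v_n$ and $w_n$ contain the same number of $0$s, namely $R_n:=r_0 r_1\cdots r_{n-1}$, and (by unique ergodicity of symbolic rank-one systems) $V$ and $W$ have the same density of $0$s.

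For the direction ($\Leftarrow$), assuming $s_n=t_n$ for all $n\geq N$, I would argue that $(v_N,w_N)$ is a replacement scheme. Because the recipes building $V$ from $v_N$ and $W$ from $w_N$ use identical cutting parameters and identical spacers above level $N$, one may write $V=v_N 1^{g_1} v_N 1^{g_2}\cdots$ and $W=w_N 1^{g_1} w_N 1^{g_2}\cdots$ with the very same exponent sequence $(g_j)$; since $\lh(v_N)=\lh(w_N)=h_N$, the expected occurrences of $v_N$ in $V$ and of $w_N$ in $W$ sit at exactly the same positions. The replacement-scheme theorem then yields topological conjugacy.

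The substance is in ($\Rightarrow$). Given a topological conjugacy, the theorem provides a replacement scheme $(v,w)$; let $P\subseteq\N$ be the common set of positions of expected occurrences (of $v$ in $V$, equivalently of $w$ in $W$). The first reduction is to show that $v$ and $w$ contain the same number of $0$s. Each expected occurrence of $v$ carries exactly the $0$s of one copy of $v$ and the gaps between copies are blocks of $1$s, so the density of $P$ equals the density of $0$s in $V$ divided by the number of $0$s in $v$; computing the same density from the $W$ side and using that the two $0$-densities agree forces the two $0$-counts to agree. Write $z$ for this common value.

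The main step, and the main obstacle, is to coarsen $(v,w)$ to a pair of the form $(v_N,w_N)$. Here I would use the structural theory of the built-from relation for non-degenerate words from \cite{GaoHill1}: for non-degenerate $V$ every block $v\prec V$ satisfies $v\prec v_N$ for all sufficiently large $N$ --- equivalently $z$ divides $R_N$ eventually --- the point being that non-degeneracy rules out arithmetically incompatible block sizes (two blocks whose $0$-counts are not comparable under divisibility would force a periodicity, hence degeneracy). Granting this for both $V$ and $W$ and choosing $N$ large enough that $v\prec v_N$ and $w\prec w_N$, each of $v_N,w_N$ is built from exactly $R_N/z$ copies of $v$, resp.\ $w$; consequently the expected occurrences of $v_N$ in $V$ are precisely every $(R_N/z)$-th expected occurrence of $v$, and likewise on the $W$ side. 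Since these are indexed identically and $P$ is common, the expected occurrences of $v_N$ in $V$ and of $w_N$ in $W$ coincide, so $(v_N,w_N)$ is itself a replacement scheme. Finally, with $\lh(v_N)=\lh(w_N)=h_N$ and coinciding positions, the exponent sequence of the $v_N$-expansion of $V$ equals that of the $w_N$-expansion of $W$; reading this common sequence off level by level (its first $r_N-1$ entries give $s_N=t_N$, the next marker gives $s_{N+1}(1)=t_{N+1}(1)$, and so on) yields $s_n=t_n$ for every $n\geq N$. I expect the coarsening step to be the crux, as it is the only place that genuinely requires the structure theory and the non-degeneracy hypothesis; the density bookkeeping and the final read-off are routine.
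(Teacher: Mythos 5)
The paper itself offers no proof of this corollary---it is presented as an immediate, ``straightforward'' consequence of the replacement-scheme theorem from \cite{GaoHill1}---so the comparison is necessarily against that implicit route. Your ($\Leftarrow$) direction is exactly the argument the paper does make explicit later, in the proof of Corollary~\ref{CBmain}(1): when $s_n=t_n$ for all $n\geq N$, the pair $(v_N,w_N)$ is a replacement scheme. Your ($\Rightarrow$) direction is a correct reconstruction of the omitted half, and its individual steps check out: the refinement of expected occurrences of $v$ by those of $v_N$ (which rests on the uniqueness of the built-from expansion of $V$), the identification of the expected occurrences of $v_N$ with every $(R_N/z)$-th expected occurrence of $v$, the fact that equal positions plus equal lengths $\lh(v_N)=\lh(w_N)=h_N$ force equal gap sequences, and the level-by-level read-off of $s_n=t_n$ for $n\geq N$ (legitimate because the two systems share the cutting parameters, so the gap sequence decomposes into spacers in the same pattern on both sides). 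One simplification: the equality $z_v=z_w$ of zero-counts needs no unique ergodicity or density; the common set of occurrence positions meets $[0,h_n)$ in exactly $\lceil R_n/z_v\rceil$ points computed in $V$ and $\lceil R_n/z_w\rceil$ points computed in $W$, and since $R_n\to\infty$ these agree for all $n$ only if $z_v=z_w$.

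The one genuine soft spot is the step you yourself flag as the crux: the claim that for non-degenerate $V$ every $v\prec V$ satisfies $v\prec v_N$ for all large $N$. As written, your justification is an appeal to \cite{GaoHill1} plus the parenthetical ``non-comparable block sizes force periodicity,'' which is not a proof. The claim is true, and the mechanism is the one you gesture at; here is how to close it. The expected occurrences of $v$ partition the zeros of $V$ into consecutive blocks of size $z$, those of $v_N$ into blocks of size $R_N=r_0\cdots r_{N-1}$, both starting at the zeroth zero; if $z\mid R_N$ for some $N$ with $R_N\geq z$, then each $v_N$-block of zeros is a union of $v$-blocks and (since both words begin and end with $0$) each relevant occurrence of $v$ sits inside the occurrence of $v_N$, so $v\prec v_N$. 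If instead $z\nmid R_N$ for every $N$, set $d=\gcd(z,R_N)$ for large $N$ (this stabilizes, and $d<z$). Encode $V$ by its gap sequence: $c_m$, the number of $1$s following the $m$-th zero, equals $s_\ell(i)$ where $\ell$ is the largest level with $R_\ell\mid m$ and $i=(m/R_\ell)\bmod r_\ell$; meanwhile $v\prec V$ forces $c_m$ to depend only on $m\bmod z$ whenever $m\not\equiv 0\pmod z$. A residue computation (using $\gcd(R_{\ell+1},z)=d$) produces, for every $\ell\geq N$ and every $1\leq i\leq r_\ell-1$, an $m$ with $R_\ell\mid m$, $(m/R_\ell)\equiv i \pmod{r_\ell}$, and $m\equiv d\pmod z$; hence all spacer values $s_\ell(i)$ with $\ell\geq N$ equal the single constant given by the $d$-th internal gap of $v$. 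But then every gap between consecutive expected occurrences of $v_N$ in $V$ equals that constant, so $V$ is simply built from $v_N$, contradicting non-degeneracy. With this lemma either proved as above or matched to a precise statement in \cite{GaoHill1} (a deferral of the same kind the paper itself makes), your proof is complete.
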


As mentioned above, this also gives an explicit sufficient condition for two symbolic rank-one systems to be measure-theoretically isomorphic.

\section{Isomorphism and disjointness}

\subsection{Non-isomorphism}

First we state a theorem from \cite{Hill}, Proposition 2.1, which is relevant to our results in this paper.

\begin{theorem}[\cite{Hill}]\label{mainiso}
Let $(r_n: n \in \N)$ and $(s_n: n \in \N)$ be cutting and spacer parameters giving rise to symbolic rank-one system $(X, \mu, \sigma)$. Let $(r_n: n \in \N)$ and $(t_n: n \in \N)$ be cutting and spacer parameters giving rise to symbolic rank-one system $(Y, \nu, \sigma)$. Suppose the following hold.  
\begin{enumerate}
\item[\rm (a)]The two sets of parameters are commensurate, i.e., for all $n$, $$\sum_{i=1}^{r_n-1} s_{n}(i) = \sum_{i=1}^{r_n-1} t_{n}(i).$$ 
\item[\rm (b)]  There is an $S \in \N$ such that for all $n$ and all $1\leq i \leq r_n-1$, $$s_n(i) \leq S \textnormal{ and } t_n(i) \leq S.$$
\item[\rm (c)]  There is an $R \in \N$ such that for infinitely many $n$, $$r_n \leq R \textnormal{ and } s_n \perp t_n.$$
\end{enumerate}   
Then $(X, \mu, \sigma)$ and $(Y, \nu, \sigma)$ are not isomorphic. 
\end{theorem}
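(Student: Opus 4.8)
The plan is to argue by contradiction. Suppose $\varphi\colon X\to Y$ is an isomorphism, so that $\varphi\circ\sigma=\sigma\circ\varphi$ a.e., and let $\rho$ be the graph joining on $X\times Y$, i.e. the pushforward of $\mu$ under $x\mapsto(x,\varphi(x))$; this is an ergodic $\sigma\times\sigma$-invariant measure with marginals $\mu$ and $\nu$. The one structural consequence of commensurateness (a) that I use at the outset is that the generating words have equal lengths, $\lh(v_n)=\lh(w_n)=h_n$ for every $n$; hence the stage-$n$ towers of $X$ (whose base is the set of $x$ for which coordinate $0$ begins an expected occurrence of $v_n$) and of $Y$ (the analogous set for $w_n$) have the same height $h_n$ and each fills all but a vanishing fraction of the underlying space. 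For $\rho$-a.e. pair $(x,y)$ and each $n$, let $\alpha_n$ and $\beta_n$ be the starting coordinates of the expected occurrences of $v_n$ in $x$ and of $w_n$ in $y$ that cover coordinate $0$, and set $\delta_n(x,y)=\beta_n-\alpha_n\in(-h_n,h_n)$. By the standing summability assumption the spacers occupy a fraction tending to $0$, so $\delta_n$ is defined $\rho$-almost everywhere, and all of the work goes into understanding the behaviour of this offset.

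The combinatorial engine ties the offset to the spacer parameters. Because $x$ is built from $v_n$ and $y$ from $w_n$, coordinate $0$ lies in the $k$-th copy of $v_n$ inside its stage-$(n+1)$ block and in the $l$-th copy of $w_n$ inside its stage-$(n+1)$ block, where these copies start at $A_k=(k-1)h_n+\sum_{i<k}s_n(i)$ and $B_l=(l-1)h_n+\sum_{i<l}t_n(i)$ relative to the block starts; one then has $\delta_n=\delta_{n+1}+(B_l-A_k)$. By (b) each partial spacer sum is at most $(R-1)S$, which is smaller than $h_n$ at the stages supplied by (c), so these partial sums record the gaps $s_n(i)$ and $t_n(i)$ faithfully. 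Suppose some offset value $d$ carried $\rho$-mass bounded below across a run of stages. Then at an incompatible stage $n$, as coordinate $0$ sweeps through successive subblocks under the shift, maintaining the offset $d$ would force the consecutive gaps between copies of $w_n$ in $y$ to agree with those between copies of $v_n$ in $x$, read starting from the position determined by $d$ and wrapping across one stage-$(n+1)$ boundary whose higher-level spacer supplies a single joint symbol $c\in\N$. Reading off these gaps exhibits $t_n$ as a subword of $s_n^\smallfrown(c)^\smallfrown s_n$, contradicting $s_n\perp t_n$. Hence incompatibility at stage $n$ forces the offset to spread: no single offset can be maintained across such a stage.

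Against this I set the rigidity of a graph joining. Since $\rho$ is the graph of an isomorphism it is as deterministic as a joining can be, and rank-one theory---concretely King's Weak Closure Theorem, which says $\varphi$ is approximated on each high tower by a fixed power of the shift---should force the offset distribution to remain \emph{tight}: for every $\varepsilon>0$ there is a window $[-M,M]$ with $\rho(|\delta_n|\le M)\ge 1-\varepsilon$ for all $n$, so that $(\delta_n)$ cannot diffuse to infinity. The bound (b) and the bound $r_n\le R$ of (c) make the spreading at each incompatible stage quantitatively uniform---a definite amount of mass is pushed out of any fixed window---because the number of subblocks and the sizes of the spacer gaps involved are bounded independently of $n$. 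Combining the forced spreading at the infinitely many incompatible stages furnished by (c) with tightness yields the contradiction, so no isomorphism $\varphi$ can exist.

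I expect the rigidity input to be the main obstacle. The recursion for $\delta_n$ and the reading-off of gaps are elementary once (a) and (b) are in hand, but showing that an isomorphism genuinely prevents the offset from diffusing---equivalently, that the graph joining keeps $(\delta_n)$ tight, so that spreading at infinitely many stages is impossible---is where the rank-one structure must be used in earnest. The cleanest route is likely a single descent estimate, in the spirit of the self-joining analysis of del Junco, Rahe, and Swanson, that simultaneously tracks the $\rho$-mass on each offset, shows it must strictly leak out of every bounded window at each incompatible stage, and shows that an isomorphism would keep a positive amount trapped; balancing these two effects over the infinitely many stages supplied by (c) is the crux of the argument.
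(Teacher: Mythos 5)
There is a genuine gap, and it sits exactly where you predicted it would: neither pillar of your ``tightness versus spreading'' dichotomy holds up, so the contradiction never materializes. First, the spreading claim is false as stated. What incompatibility actually gives you --- and your reading-off-the-gaps computation is the correct way to see it --- is that along a generic orbit the offset $\delta_n$ cannot stay \emph{constant} while coordinate $0$ sweeps through the $r_n$ stage-$n$ blocks inside a single stage-$(n+1)$ block of $y$: otherwise $t_n$ would occur in ${s_n}^\smallfrown(c)^\smallfrown s_n$, contradicting $s_n\perp t_n$. But that only forces at least one jump per stage-$(n+1)$ block, of some size $k$ with $0<|k|\le S$; the offset is then perfectly free to oscillate between, say, two nearby values forever, so no mass is ever ``pushed out of any fixed window,'' and the quantitatively uniform leakage you need does not follow from (a)--(c). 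Second, the tightness claim is both unproven (you flag it yourself as the crux) and rests on a misapplication: King's Weak Closure Theorem describes the centralizer of a single rank-one transformation, i.e.\ self-isomorphisms; it says nothing directly about an isomorphism $\varphi$ between two \emph{different} systems, and no argument is offered to bridge that. So the proposal is a strategy whose central step is missing, and whose two halves, as stated, cannot be played against each other.

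The repair is that neither pillar is needed: the local jump you correctly extracted already finishes the proof once it is fed into Rudolph's lemma rather than into a diffusion argument. This is exactly how the paper proves the companion Theorem~\ref{maindisjoint} (the paper does not prove the present theorem, which it quotes from \cite{Hill}, but that proof is the template; the hypotheses differ only in condition (d)). Take a generic pair $(x,y)$ for your graph joining $\rho$, which is ergodic. At each of the infinitely many stages with $r_n\le R$ and $s_n\perp t_n$, the mismatch position $i_0$ where the gap word $p$ read in $x$ differs from $t_n$ yields intervals $[c_n,d_n]$ with $d_n-c_n\ge\alpha(b_n-a_n)$, an integer $e_n$, and $k_n=p(i_0)-t_n(i_0)$ with $0<|k_n|\le S$, such that $x\!\upharpoonright\![c_n,d_n]=x\!\upharpoonright\![c_n+k_n+e_n,d_n+k_n+e_n]$ and $y\!\upharpoonright\![c_n,d_n]=y\!\upharpoonright\![c_n+e_n,d_n+e_n]$. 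Pigeonhole a single nonzero $k\in[-S,S]$ working for infinitely many $n$. As the paper's own phrasing of Lemma~\ref{tech} indicates, conditions (i)--(iv) together with ergodicity of the joining already give that $\rho$ is $(\sigma^k\times\id)$-invariant; condition (v), ergodicity of $\sigma^k$, enters only in upgrading invariance to $\rho=\mu\times\nu$, which you do not need. For a graph joining, invariance alone is absurd: $(\sigma^k\times\id)_*\rho$ is the graph joining of $\varphi\circ\sigma^{-k}$, so invariance forces $\varphi\circ\sigma^{-k}=\varphi$ a.e., hence $\sigma^k=\id$ a.e., contradicting aperiodicity of $(X,\mu,\sigma)$. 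This is precisely why condition (d) appears in Theorem~\ref{maindisjoint} but not here: ruling out a graph joining needs no ergodic power, whereas ruling out \emph{all} ergodic joinings (disjointness) does.
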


\subsection{Disjointness}

\begin{theorem}\label{maindisjoint}
Let $(r_n: n \in \N)$ and $(s_n: n \in \N)$ be cutting and spacer parameters giving rise to symbolic rank-one system $(X, \mu, \sigma)$. Let $(r_n: n \in \N)$ and $(t_n: n \in \N)$ be cutting and spacer parameters giving rise to symbolic rank-one system $(Y, \nu, \sigma)$. Suppose the following hold.  
\begin{enumerate}
\item[\rm (a)] The two sets of parameters are commensurate, i.e., for all $n$, $$\sum_{i=1}^{r_n-1} s_{n}(i) = \sum_{i=1}^{r_n-1} t_{n}(i).$$ 
\item[\rm (b)]  There is an $S \in \N$ such that for all $n$ and all $1\leq i \leq r_n-1$, $$s_n(i) \leq S \textnormal{ and } t_n(i) \leq S.$$
\item[\rm (c)]  There is an $R \in \N$ such that for infinitely many $n$, $$r_n \leq R \textnormal{ and } s_n \perp t_n.$$
\item[\rm (d)]  For each $k >1$, either $(X, \mu, \sigma^k)$ or $(Y, \nu, \sigma^k)$ is ergodic.\end{enumerate}   
Then $(X, \mu, \sigma)$ and $(Y, \nu, \sigma)$ are disjoint. 
\end{theorem}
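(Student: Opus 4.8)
The plan is to follow the method of del Junco, Rahe, and Swanson and reduce disjointness to a statement about the relative displacement of block structures under an arbitrary joining. Since $\mu$ and $\nu$ are ergodic, the ergodic decomposition of joinings shows it suffices to prove that every ergodic $\sigma\times\sigma$-invariant measure $\rho$ on $X\times Y$ with marginals $\mu$ and $\nu$ equals $\mu\times\nu$. So fix such an ergodic joining $\rho$, and let $(v_n)$ and $(w_n)$ be the generating sequences for $V$ and $W$. Because the two parameter sequences share the cutting parameter $(r_n)$ and are commensurate by (a), the heights agree: $h_n=\lh(v_n)=\lh(w_n)$ for every $n$, so at each stage the $v_n$-blocks of a point $x$ and the $w_n$-blocks of a point $y$ have equal length and their alignment is meaningful. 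For $\rho$-generic $(x,y)$ and each $n$, both coordinates lie inside an expected occurrence of $v_n$, resp.\ $w_n$, with probability tending to $1$ as $n\to\infty$ (a standard consequence of the finite-measure assumption \eqref{eqn:a}); on this event let $D_n(x,y)\in\Z$ be the signed distance between the left endpoints of the two blocks containing coordinate $0$.

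The strategy is to show that the joining $\rho$ is determined by the asymptotic behavior of $D_n$ and that $\rho=\mu\times\nu$ precisely when this distribution equidistributes. Shift-invariance of $\rho$ controls the passage from level $n$ to level $n+1$: the block $v_{n+1}$ is a concatenation of $r_n$ copies of $v_n$ with the spacer word $s_n$ interposed, and $w_{n+1}$ is the analogous concatenation using $t_n$, so the admissible values of $D_{n+1}$ are determined by $D_n$ together with the choice of sub-blocks. There are two scenarios to exclude. In the first, the displacement stabilizes---the integers $D_n(x,y)$ are eventually constant for $\rho$-a.e.\ such point---and a standard ergodicity argument then upgrades this to an off-diagonal joining supported on the graph of an isomorphism of $(X,\mu,\sigma)$ with $(Y,\nu,\sigma)$, which is impossible by Theorem~\ref{mainiso} under hypotheses (a)--(c). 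The combinatorial engine behind this impossibility is condition (c): at the infinitely many stages with $r_n\le R$ and $s_n\perp t_n$, the incompatibility forbids a copy of $w_n$ from occupying a fixed offset inside the $s_n$-pattern of $v_n$ across a whole block, so a definite proportion of the $\rho$-mass must redistribute its displacement at each such stage.

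The second scenario is that $D_n$ spreads out, but only along a coset structure: its distribution might equidistribute merely modulo some integer $m>1$ while remaining correlated modulo $m$. Such a joining is relatively independent over a common cyclic factor of order $m$, which would be a factor of both $(X,\mu,\sigma)$ and $(Y,\nu,\sigma)$. This is exactly what hypothesis (d) rules out: for each $m>1$ at least one of $(X,\mu,\sigma^m)$, $(Y,\nu,\sigma^m)$ is ergodic, so at least one marginal has no $\Z/m\Z$ rotation factor and there can be no common cyclic factor. With both scenarios excluded, $D_n$ must equidistribute fully against every modulus, which forces $\rho=\mu\times\nu$ and completes the proof.

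The step I expect to be the main obstacle is the combinatorial--measure-theoretic core of the second paragraph: turning the purely symbolic incompatibility $s_n\perp t_n$ into the quantitative assertion that a fixed positive fraction of the joining mass must spread its relative displacement at each incompatible stage, with enough uniformity---supplied by the bounds $r_n\le R$ from (c) and $s_n(i),t_n(i)\le S$ from (b)---to propagate the conclusion across the infinitely many such stages. The surrounding technical points (controlling the vanishing spacer density, proving convergence of the displacement distributions, and cleanly interfacing the displacement analysis with total ergodicity in the final step) are routine by comparison but still require care.
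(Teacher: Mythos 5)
Your proposal has the right ambient setup (ergodic joining, matched heights $\lh(v_n)=\lh(w_n)$, alignment of block structures), but its logical core is a gap, and it is exactly the part you flag at the end as ``the main obstacle.'' The argument rests on a trichotomy for the displacement sequence $D_n$ --- it stabilizes, or it equidistributes only modulo some $m>1$, or it equidistributes fully --- together with the claim that full equidistribution forces $\rho=\mu\times\nu$. Neither the exhaustiveness of this trichotomy nor that final implication is proved, and the final implication is not a standard fact: nothing in the proposal converts a distributional statement about block offsets into the identity $\rho=\mu\times\nu$. Note also that excluding alternatives cannot suffice on abstract grounds: non-isomorphism plus the absence of a common cyclic factor does not imply disjointness (non-disjoint pairs with no common nontrivial factor exist), so any proof must produce the product measure by a positive mechanism rather than by ruling out the graph-joining and cyclic-factor scenarios. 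Your scenario-one reduction to Theorem~\ref{mainiso} also needs an argument (eventually constant displacement does not immediately yield a graph joining), though that is a secondary issue.

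The mechanism the paper uses --- and what your outline is missing --- is Rudolph's criterion (Lemma~\ref{tech}, Lemma 6.15 of \cite{RudolphBook}): for a pair $(x,y)$ generic for $\rho$ one exhibits, at infinitely many scales $n$, windows $[a_n,b_n]\ni 0$, subintervals $[c_n,d_n]$ with $d_n-c_n\geq\alpha(b_n-a_n)$, and integers $e_n$, $k_n$ with $0<|k_n|\leq S$, such that $x(i)=x(i+k_n+e_n)$ and $y(i)=y(i+e_n)$ for all $i\in[c_n,d_n]$. Pigeonholing a single value $k$ then makes $\rho$ invariant under $\sigma^k\times\id$, and hypothesis (d) --- after possibly switching the roles of $X$ and $Y$ and the sign of $k$ --- supplies the ergodicity of the $k$-th power needed to conclude $\rho=\mu\times\nu$. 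The incompatibility $s_n\perp t_n$ enters not to show that ``joining mass must spread,'' but combinatorially: reading off the $r_n$ consecutive expected occurrences of $v_n$ in $x$ against an expected occurrence of $w_{n+1}$ in $y$ gives a word $v_n1^{p(1)}v_n\cdots 1^{p(r_n-1)}v_n$, where $p$ is necessarily a subword of ${s_n}^\smallfrown(q)^\smallfrown s_n$ for some $q$; incompatibility forces $p\neq t_n$, and any index $i_0$ with $p(i_0)\neq t_n(i_0)$ yields the nonzero bounded shift $k_n=p(i_0)-t_n(i_0)$ and translation $e_n=\lh(v_n)+t_n(i_0)$, with the overlap of the corresponding $v_n$ and $w_n$ occurrences serving as $[c_n,d_n]$; conditions (a) and (b) give the length and proportionality bounds. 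Without this construction (or an equivalent one), the proposal does not reach the conclusion.
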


The only difference between the hypotheses of Theorems \ref{mainiso} and \ref{maindisjoint} is condition (d) above. Condition (d) is necessary for disjointness. In fact, if for some $k>1$, both $(X, \mu, \sigma^k)$ and $(Y, \nu, \sigma^k)$ are not ergodic, then they have a common factor which is a cyclic permutation on an $k$-element set, and thus the two transformations are not disjoint. It will be clear from the proof below that the theorem still holds if condition (d) is weakened to the following:
\begin{enumerate}
\item[\rm (d')] For each $1<k\leq S$, where $S$ is the bound from condition (b), either $(X, \mu,\sigma^k)$ or $(Y,\nu, \sigma^k)$ is ergodic.
\end{enumerate}

The rest of this subsection is devoted to a proof of Theorem~\ref{maindisjoint}. We will follow the approach of del Junco, Rahe, and Swanson \cite{delJuncoRaheSwanson} in their proof of minimal self-joinings for Chacon's transformation, as presented by Rudolph in his book \cite{RudolphBook}, Section 6.5.

The setup of the proof is standard. Let $\overline{\mu}$ be an ergodic joining of $\mu$ and $\nu$ on $X\times Y$. We need to show that $\overline{\mu}=\mu\times\nu$. By Lemma 6.14 of \cite{RudolphBook} (or Proposition 2 of \cite{delJuncoRaheSwanson}), it suffices to find some $k\geq 1$ such that $(X, \mu, \sigma^k)$ is ergodic and $\overline{\mu}$ is $(\sigma^k\times \id)$-invariant, where $\id$ is the identity transformation on $Y$. For this let $(x, y)\in X\times Y$ satisfy the ergodic theorem for $\overline{\mu}$, i.e., for all measurable $A\subseteq X\times Y$,
$$ \displaystyle\lim_{n\to\infty}\frac{1}{n}\sum_{i=0}^{n-1} \chi_A(\sigma^i(x), \sigma^i(y))=\overline{\mu}(A) $$
and
$$ \displaystyle\lim_{n\to\infty}\frac{1}{n}\sum_{i=0}^{n-1} \chi_A(\sigma^{-i}(x), \sigma^{-i}(y))=\overline{\mu}(A). $$
Such $(x, y)$ exists by the ergodicity of $\overline{\mu}$. Lemma 6.15 of \cite{RudolphBook} gives a sufficient condition to complete the proof. We state it below in our notation.

\begin{lemma}\label{tech} Suppose there are integers $a_n, b_n, c_n, d_n, e_n\in\Z$ for all $n\in\N$, a positive integer $k\geq 1$ and a real number $\alpha>0$ such that for all $n\in\N$,
\begin{enumerate}
\item[\rm (i)] $a_n\leq 0\leq b_n$ and $\lim_n (b_n-a_n)=+\infty$;
\item[\rm (ii)] $a_n\leq c_n\leq d_n\leq b_n$ and $a_n\leq c_n+e_n\leq d_n+e_n\leq b_n$;
\item[\rm (iii)] $d_n-c_n\geq \alpha(b_n-a_n)$; 
\item[\rm (iv)] for all $c_n\leq i\leq d_n$, $x(i)=x(i+k+e_n)$ and $y(i)=y(i+e_n)$; and
\item[\rm (v)] $(X, \mu, \sigma^k)$ is ergodic. 
\end{enumerate}
Then $\overline{\mu}$ is $(\sigma^k\times \id)$-invariant, and so $\overline{\mu}=\mu\times\nu$.
\end{lemma}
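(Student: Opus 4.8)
The plan is to establish the $(\sigma^k\times\id)$-invariance of $\overline{\mu}$; once this is in hand, hypothesis (v) together with Lemma 6.14 of \cite{RudolphBook} immediately yields $\overline{\mu}=\mu\times\nu$, so the substance of the lemma is the invariance. I would prove it by contradiction, via the ergodic dichotomy. Set $\nu_k\defeq(\sigma^k\times\id)_*\overline{\mu}$. Since $\sigma^k\times\id$ is an automorphism commuting with $\sigma\times\sigma$, the measure $\nu_k$ is again $(\sigma\times\sigma)$-invariant and ergodic, so either $\nu_k=\overline{\mu}$ (which is precisely the desired invariance) or $\nu_k$ and $\overline{\mu}$ are mutually singular. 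Moreover, since $(x,y)$ is generic for $\overline{\mu}$, the point $(\sigma^k x,y)=(\sigma^k\times\id)(x,y)$ is generic for $\nu_k$, with respect to both the forward and backward averages. The goal is to use conditions (i)--(iv) to rule out mutual singularity.

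The combinatorial heart is a transfer of orbit statistics across the matching window. Fix a basic cylinder $U\subseteq X\times Y$ determined by the coordinates in $[-L,L]$. Condition (iv) states that $x(i)=x(i+k+e_n)$ and $y(i)=y(i+e_n)$ for $c_n\leq i\leq d_n$; reading off the local configuration of the orbit, this gives, for every $i$ with $[i-L,i+L]\subseteq[c_n,d_n]$,
$$\chi_U(\sigma^i x,\sigma^i y)=\chi_U(\sigma^{i+k+e_n}x,\sigma^{i+e_n}y).$$
Substituting $j=i+e_n$ rewrites the right-hand side as $\chi_U(\sigma^{j+k}x,\sigma^j y)$, so that the ``diagonal'' indicator for the $(x,y)$-orbit at position $i$ equals the ``$k$-shifted'' indicator for the $(x,y)$-orbit at position $j=i+e_n$. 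Summing over the shrunk window $[c_n+L,d_n-L]$, whose image under $i\mapsto i+e_n$ is contained in $[a_n,b_n]$ by (ii), the number of visits of the $(x,y)$-orbit to $U$ inside $[c_n+L,d_n-L]$ equals the number of visits of the $(\sigma^k x,y)$-orbit to $U$ inside $[c_n+e_n+L,d_n+e_n-L]$.

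It remains to turn these counts into a lower bound on $\nu_k(U)$. Anchoring at $0\in[a_n,b_n]$ (condition (i)) and combining the forward and backward ergodic averages at $(x,y)$, the normalized count of $U$-visits of the $(x,y)$-orbit over the full window $[a_n,b_n]$ tends to $\overline{\mu}(U)$, and likewise the count for the $(\sigma^k x,y)$-orbit tends to $\nu_k(U)$, since $b_n-a_n\to\infty$. Because the shrunk window contains at least $\alpha(b_n-a_n)-2L$ indices by (iii), of which at most $(1-\overline{\mu}(U))(b_n-a_n)+o(b_n-a_n)$ can miss $U$, the $(x,y)$-orbit visits $U$ on at least $(\alpha-1+\overline{\mu}(U))(b_n-a_n)-o(b_n-a_n)$ indices of the shrunk window; by the matching equality the $(\sigma^k x,y)$-orbit then visits $U$ at least as often inside $[a_n,b_n]$. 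Dividing by $b_n-a_n+1$ and letting $n\to\infty$ gives
$$\nu_k(U)\geq\alpha-1+\overline{\mu}(U)$$
for every cylinder $U$. If $\nu_k$ and $\overline{\mu}$ were mutually singular, I could choose a cylinder $U$ with $\overline{\mu}(U)>1-\epsilon$ and $\nu_k(U)<\epsilon$ for any prescribed $\epsilon>0$; the displayed inequality would then force $\alpha-\epsilon\leq\epsilon$, which fails once $\epsilon<\alpha/2$. Hence $\nu_k=\overline{\mu}$, so $\overline{\mu}$ is $(\sigma^k\times\id)$-invariant, and (v) with Lemma 6.14 completes the proof.

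The main obstacle is that the matching yields only the one-sided estimate $\nu_k(U)\geq\alpha-1+\overline{\mu}(U)$, which on its own is far from proving $\nu_k=\overline{\mu}$; the essential move is to feed this weak bound into the ergodic equal-or-singular dichotomy by applying it to a cylinder of $\overline{\mu}$-measure close to $1$. By comparison, the bookkeeping of the $O(L)$ boundary indices and the verification that the window averages genuinely converge to $\overline{\mu}(U)$ and $\nu_k(U)$ (using that $0\in[a_n,b_n]$ and $b_n-a_n\to\infty$) are routine.
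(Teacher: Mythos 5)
Your proof is correct, but there is nothing in the paper to compare it against: the authors do not prove this lemma at all. It is quoted, in their notation, as Lemma 6.15 of Rudolph's book \cite{RudolphBook}, and the final implication (from $(\sigma^k\times\id)$-invariance plus ergodicity of $\sigma^k$ to $\overline{\mu}=\mu\times\nu$) is likewise delegated to Lemma 6.14 there. What you have written is a correct, self-contained rendering of the standard del Junco--Rahe--Swanson/Rudolph argument that underlies the citation: set $\nu_k=(\sigma^k\times\id)_*\overline{\mu}$, note that it is again $(\sigma\times\sigma)$-invariant and ergodic (since $\sigma^k\times\id$ commutes with $\sigma\times\sigma$), invoke the equal-or-mutually-singular dichotomy for ergodic measures, and use the matching windows (i)--(iv) together with genericity of $(x,y)$ (hence of $(\sigma^kx,y)$ for $\nu_k$) to contradict singularity; your uniform estimate $\nu_k(U)\geq\overline{\mu}(U)-(1-\alpha)$ is exactly the kind of quantitative transfer that makes this work. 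One point should be tightened. In the last step you ask for \emph{a cylinder} $U$ with $\overline{\mu}(U)>1-\epsilon$ and $\nu_k(U)<\epsilon$; a single basic cylinder of $\overline{\mu}$-measure close to $1$ need not exist. What mutual singularity actually provides, after approximating a set $B$ with $\overline{\mu}(B)=1$, $\nu_k(B)=0$ in the measure $\overline{\mu}+\nu_k$, is a \emph{finite union} of basic cylinders, i.e.\ a set determined by the coordinates in some window $[-L,L]$. Since your derivation of the matching identity and of the inequality uses only that $U$ is determined by the coordinates in $[-L,L]$, the estimate applies verbatim to such sets, and with that rewording the argument is complete.
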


Note that Lemma~\ref{tech} has several valid variations. One variation is a symmetric version with the spaces $X$ and $Y$ switched. This version is obviously true since the setup is entirely symmetric for $X$ and $Y$. Another variation is the version in which $k\leq -1$ is a negative integer. Note that $(X, \mu, \sigma^{k})$ is ergodic if and only if $(X, \mu,\sigma^{-k})$ is ergodic. This version can be obtained by applying Lemma~\ref{tech} to $(X, \mu, \sigma^{-1})$ and $(Y, \nu, \sigma^{-1})$. Finally, we also have the variation in which both $k$ is negative and $X$ and $Y$ are switched. 

Now we claim that a slightly weaker construction already suffices: it is enough to find $a_n, b_n, c_n, d_n, e_n\in\Z$ for all $n\in\N$, a positive integer $K\geq 1$ and a real number $\alpha$ so that (i)--(iii) hold and for each $n\in\N$, (iv) holds for some nonzero $k\in\Z$ with $k\in[-K,K]$. In fact, since there are only finitely many integers between $-K$ and $K$, we get some nonzero integer $k\in[-K,K]$ and infinitely many $n$ for which the conditions (i)--(iv) of Lemma~\ref{tech} are satisfied. If $k>0$ and $(X, \mu, \sigma^k)$ is ergodic then we are done by Lemma~\ref{tech}. If $k>0$ but $(X, \mu, \sigma^k)$ is not ergodic, then by condition (d), $(Y, \nu, \sigma^k)$ is ergodic. It follows that $(Y, \nu, \sigma^{-k})$ is ergodic. Now we are done by the variation of Lemma~\ref{tech} in which both $k$ is negative and $X$ and $Y$ are switched. If $k<0$ we similarly apply other variations of the lemma. 

We now begin our construction. Let $K=S$ where $S$ is the bound in condition (b). Let 
$$ \alpha= \displaystyle\frac{1}{12(R+1)} $$
where $R$ is the bound in condition (c). Note that $R\geq 2$ because $r_n\geq 2$ for all $n\in\N$. Let $$D=\{n\in\N: r_n\leq R \mbox{ and } s_n\perp t_n\}.$$ Then $D$ is infinite by condition (c). 

Let $(v_n: n\in\N)$ be the generating sequence given by the cutting and spacer parameters $(r_n:n\in\N)$ and $(s_n:n\in\N)$. Then for each $n\in\N$, $x$ is built from $v_n$. Let $(w_n:n\in\N)$ be the generating sequence given by the cutting an spacer parameters $(r_n:n\in\N)$ and $(t_n:n\in\N)$. Then for each $n\in\N$, $y$ is built from $w_n$. By the commensurability condition (a), we have $\lh(v_n)=\lh(w_n)$ for all $n\in\N$. 

Fix an $n_0$ such that $\lh(v_{n_0})>3RS\geq 6S$. For any $n\in D$ with $n\geq n_0$, we define $$a_n, b_n, c_n, d_n, e_n\in\N$$ to satisfy (i)--(iii) and (iv) with some nonzero $k_n\in[-S, S]$. Define $$a_n=-2\lh(v_{n+1}) \mbox{ and } b_n=2\lh(v_{n+1}).$$ It is clear that (i) is satisfied. 

Before defining $c_n, d_n, e_n$ and $k_n$ we need to analyze the expected occurrences of $v_{n+1}$ in $x$ and the expected occurrences of $w_{n+1}$ in $y$. Since $y$ is built from $w_{n+1}$, by condition (b) the interval $[-S, S]$ has a nonempty intersection with some expected occurrence of $w_{n+1}$ in $y$. Fix one such expected occurrence of $w_{n+1}$ and suppose the occurrence begins at position $l$ and finishes at position $m$. Thus $a_n\leq -\lh(v_{n+1})-S\leq l\leq S$ and $m=l+\lh(v_{n+1})-1\leq S+\lh(v_{n+1})\leq b_n$.

Note that $x$ is built from $v_n$. We can then define an integer $j\in\Z$ where $|j|$ is the least such that there is an expected occurrence of $v_n$ at position $l+j$. A moment of reflection shows that $|j|\leq \frac{1}{2}(\lh(v_n)+S)$. Since $\lh(v_n)>6S$, the occurrence of $w_n$ at position $l$ and the occurrence of $v_n$ at position $l+j$ overlap for at least $\frac{1}{3}\lh(v_n)$ many positions.

Starting from the expected occurrence of $v_n$ at position $l+j$ in $x$, we examine the next $r_n$ many consecutive expected occurrences of $v_n$ in $x$. Suppose there is an occurrence of the following word in $x$ starting at position $l+j$:
$$ v_n1^{p(1)}v_n\dots 1^{p(r_n-1)}v_n $$
where $p\in \mathcal{S}$ with $\lh(p)=r_n-1$. Because $x$ is also built from $v_{n+1}$, and each expected occurrence of $v_{n+1}$ contains $r_n$ many expected occurrences of $v_n$, the above word is contained in an occurrence of $v_{n+1}1^qv_{n+1}$ for some $q\in\N$, where each demonstrated occurrence of $v_{n+1}$ is expected. Note that
$$ v_{n+1}1^qv_{n+1}=v_n1^{s_n(1)}v_n\dots 1^{s_n(r_n-1)}v_n1^qv_n1^{s_n(1)}v_n\dots 1^{s_n(r_n-1)}v_n. $$
By comparison, we get that $p$ is a subword of $s_n^\smallfrown(q)^\smallfrown s_n$.

Since $n\in D$ and therefore $s_n\perp t_n$, we conclude that $p\neq t_n$. Let $i_0$ be the least such that $1\leq i_0\leq r_n-1$ and $p(i_0)\neq t_n(i_0)$. Let
$$ h=(i_0-1)\lh(v_n)+\sum_{i=1}^{i_0-1}t_n(i). $$
Then $l+h$ is the beginning position of an expected occurrence of $w_n$ in $y$, and $l+j+h$ is the beginning position of an expected occurrence of $v_n$ in $x$. There is an occurrence of $w_n1^{t_n(i_0)}w_n$ in $y$ beginning at position $l+h$, and there is an occurrence of $v_n1^{p(i_0)}v_n$ in $x$ beginning at position $l+j+h$. 

Now we define $[c_n, d_n]$ to be the interval of overlap between the occurrence of $w_n$ in $y$ at position $l+h$ and the occurrence of $v_n$ in $x$ at position $l+j+h$. Since $|j|\leq \frac{1}{2}(\lh(v_n)+S)$ and $\lh(v_n)>6S$, we get that
$$ d_n-c_n\geq \frac{1}{3}\lh(v_n). $$
Define
$$ e_n=\lh(v_n)+t_n(i_0) $$
and
$$ k_n= p(i_0)-t_n(i_0). $$
Since $[c_n, d_n]$ is contained in the occurrence of $v_n$ at position $l+j+h$, and since $k_n+e_n=\lh(v_n)+p(i_0)$, we have that $x\!\upharpoonright\![c_n,d_n]$ and $x\!\upharpoonright\! [c_n+k_n+e_n, d_n+k_n+e_n]$ are the same words. Similarly, $[c_n, d_n]$ is also contained in the occurrence of $w_n$ at position $l+h$, and it follows that $y\!\upharpoonright\![c_n,d_n]$ and $y\!\upharpoonright\![c_n+e_n, d_n+e_n]$ are the same words. This means that (iv) is satisfied. 

Since $[c_n, d_n], [c_n+e_n, d_n+e_n]\subseteq [l,m]\subseteq[a_n, b_n]$, we know that (ii) is satisfied. Finally, 
$$\displaystyle\frac{d_n-c_n}{b_n-a_n}\geq \frac{\lh(v_n)}{3\cdot 4\lh(v_{n+1})}\geq\frac{\lh(v_n)}{12(R\lh(v_n)+RS)}\geq\alpha.$$
This shows that (iii) is satisfied. 

The proof of Theorem~\ref{maindisjoint} is complete.

\subsection{Applications to canonically bounded transformations}

Theorems~\ref{mainiso} and \ref{maindisjoint}, in combination with results in  \cite{GaoHill1}, give combinatorial criteria for isomorphism and disjointness for certain bounded rank-one transformations. These criteria in terms of the cutting and spacer parameters are, in principle, easy to check.

Let $(r_n:n\in\N)$ and $(s_n:n\in\N)$ be the cutting and spacer parameters for a rank-one transformation $T$. For integer $d>1$, consider the statement
\begin{align*}\tag{$\mbox{E}_d$} \forall N\in\N\ \exists n, i\in\N\ [\, n\geq N, 1\leq i\leq r_n-1, \mbox{and } h_N+s_n(i)\not\equiv 0 \mbox{ mod } d\,] \end{align*}
where $(h_n:n\in\N)$ is the sequence defined in equation (\ref{h}).

The following fact has been proved in \cite{GaoHill1}.

\begin{theorem}[\cite{GaoHill1}] Let $T$ be a bounded rank-one transformation with cutting and spacer parameters $(r_n:n\in\N)$ and $(s_n: n\in\N)$. Then for any integer $d>1$, $T^d$ is ergodic  if and only if {\rm ($\mbox{E}_d$)} holds.
\end{theorem}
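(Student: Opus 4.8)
The plan is to prove the equivalence through the spectral characterization of power-ergodicity, and then to translate the resulting divisibility condition into the combinatorial statement $(\mathrm{E}_d)$ by means of the height recursion (\ref{h}). First I would reduce to eigenvalues. A rank-one transformation is ergodic (indeed $(X,\mu,\sigma)$ is uniquely ergodic), so the $T^d$-invariant $\sigma$-algebra carries an action of the cyclic quotient of $\langle T\rangle$ by $\langle T^d\rangle$, and on its $L^2$-space the Koopman operator of $T$ satisfies $U_T^d=\id$. Consequently $T^d$ is non-ergodic if and only if that sub-$\sigma$-algebra is non-trivial, i.e.\ if and only if $T$ has an eigenvalue of the form $e^{2\pi i/m}$ with $m>1$ and $m\mid d$. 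This reduces the theorem to showing that $T$ has no such eigenvalue exactly when $(\mathrm{E}_d)$ holds.

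Next I would establish the rank-one divisibility criterion for root-of-unity eigenvalues. For each $n$ and $1\le i\le r_n$ set $H_{n,i}=(i-1)h_n+\sum_{j=1}^{i-1}s_n(j)$, the level within the stage-$(n{+}1)$ tower of the base of the $i$-th copy of the stage-$n$ tower; then $H_{n,1}=0$ and $H_{n,i+1}-H_{n,i}=h_n+s_n(i)$. Working in the cutting-and-stacking model, I would form the ``level modulo $m$'' function and show that it descends to a factor map onto the rotation by $1$ on the cyclic group of order $m$ precisely when, for all large $n$ and all $i$, one has $m\mid H_{n,i}$, equivalently $m\mid(h_n+s_n(i))$. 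Necessity is immediate from tracking how a point's level changes as $n$ grows; the real content is sufficiency, namely that once the levels are eventually $\equiv 0\pmod m$ the approximants $f_n=e^{2\pi i(\mathrm{level})/m}$ converge in $L^2$ to an honest eigenfunction. Here I would use the boundedness hypothesis together with the summability (\ref{eqn:a}) to ensure the spacer levels are negligible and the $L^2$-defect $\tfrac1{r_n}\sum_i|e^{2\pi iH_{n,i}/m}-1|^2$ vanishes for large $n$.

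Finally I would convert this divisibility condition into $(\mathrm{E}_d)$ using (\ref{h}). The key elementary point is that $(\mathrm{E}_d)$ fails exactly when there is an $N$ with every spacer beyond stage $N$ congruent to $-h_N\pmod d$; the recursion $h_{n+1}=r_nh_n+\sum_i s_n(i)$ then forces, by induction, $h_n\equiv h_N\pmod d$ for all $n\ge N$, whence $h_n+s_n(i)\equiv 0\pmod d$ throughout. Conversely, eventual divisibility $d\mid(h_n+s_n(i))$ returns the negation of $(\mathrm{E}_d)$ by the same computation. Combined with the criterion of the previous step, this identifies the failure of $(\mathrm{E}_d)$ with the presence of the eigenvalue $e^{2\pi i/d}$, and hence with non-ergodicity of $T^d$ through that particular root of unity.

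I expect the main obstacle to be twofold. The analytic crux is the sufficiency half of the second step: manufacturing a genuine $L^2$-eigenfunction out of the purely arithmetic divisibility statement, where the bounded geometry of the towers and the fast growth of the $h_n$ must be exploited carefully to control the tails. The bookkeeping crux is matching the single condition $(\mathrm{E}_d)$ against eigenvalues of \emph{every} order $m\mid d$ with $m>1$, since the third step directly addresses only the order-$d$ eigenvalue; I would handle this by a careful analysis of the residues of $(h_n+s_n(i))$ modulo each divisor of $d$, again via the recursion, to see how the arithmetic constraint implicit in $(\mathrm{E}_d)$ interacts with all such moduli. This divisor-matching step is where I anticipate the argument to be most delicate.
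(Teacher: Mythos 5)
The paper does not actually prove this theorem---it is quoted from \cite{GaoHill1} without proof---so there is no internal argument to compare against, and your proposal must be judged on its own merits. Its three steps are individually sound and form the natural route: the spectral reduction ($T^d$ is non-ergodic iff $T$ has an eigenvalue $e^{2\pi i/m}$ with $m>1$ and $m\mid d$), the criterion that for bounded parameters $e^{2\pi i/m}$ is an eigenvalue iff $m\mid h_n+s_n(i)$ for all $i$ and all sufficiently large $n$, and the translation, via the recursion (\ref{h}), of the failure of $(\mathrm{E}_d)$ into ``eventually $d\mid h_n+s_n(i)$ for all $i$''. (Incidentally, the analytic crux you fear in the second step is mild in the bounded setting: once the divisibility holds from some stage on, the approximants $f_n$ literally stabilize on the nested towers, whose measures tend to $1$, so no delicate tail estimate is needed.)

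The genuine gap is the step you yourself flagged as ``divisor-matching'', and it is not merely delicate: the implication it requires is false. Your argument identifies the failure of $(\mathrm{E}_d)$ with the presence of the eigenvalue $e^{2\pi i/d}$, which gives ``$(\mathrm{E}_d)$ fails $\Rightarrow T^d$ non-ergodic''; the converse, however, needs ``$T^d$ non-ergodic $\Rightarrow e^{2\pi i/d}$ is an eigenvalue'', and an eigenvalue whose order is a proper divisor of $d$ can destroy ergodicity of $T^d$ while $(\mathrm{E}_d)$ continues to hold. Concretely, take $r_n=2$ for all $n$ and $s_n(1)$ alternating $1,3,1,3,\dots$. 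Then $h_n\equiv 1,3,1,3,\dots\pmod 4$ and $h_n+s_n(1)\equiv 2\pmod 4$ for every $n$; hence $2\mid h_n+s_n(1)$ always, so $-1$ is an eigenvalue and $T^2$, a fortiori $T^4$, is non-ergodic, yet $(\mathrm{E}_4)$ holds (given $N$, choose $n\geq N$ of the same parity as $N$; then $h_N+s_n(1)\equiv 2\pmod 4$, not $0$). So no analysis of residues modulo divisors of $d$ can close this step. What your three steps do prove is the divisor-quantified statement: $T^d$ is ergodic iff $(\mathrm{E}_m)$ holds for every $m\mid d$ with $m>1$ (equivalently, for every prime $p\mid d$); this coincides with the quoted statement when $d$ is prime, and for prime $d$ your proof is complete. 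In other words, the defect lies as much in the statement as literally quoted as in your proposal, but the final assembly you describe cannot be carried out for composite $d$.
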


We can now state our main result about commensurate, canonically bounded rank-one transformations.

\begin{corollary}\label{CBmain} Let $T$ be a rank-one transformation with bounded canonical cutting and spacer parameters $(r_n:n\in\N)$ and $(s_n: n\in\N)$. Let $S$ be a rank-one transformation with bounded canonical cutting and spacer parameters $(q_n:n\in\N)$ and $(t_n: n\in\N)$. Suppose the parameters for $T$ and $S$ are commensurate. Then the following hold.
\begin{enumerate}
\item $T$ and $S$ are isomorphic if and only if there is $N\in\N$ such that for all $n\geq N$, $s_n=t_n$.
\item $T$ and $S$ are disjoint if and only if for infinitely many $n\in\N$, $s_n\neq t_n$ and for every integer $d>1$, either $T^d$ is ergodic or $S^d$ is ergodic.
\end{enumerate}
\end{corollary}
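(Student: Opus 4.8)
The plan is to reduce both equivalences to the symbolic models and then feed the combinatorial hypotheses into Theorems~\ref{mainiso} and \ref{maindisjoint}. Since $T$ is isomorphic to $(X_V,\mu,\sigma)$ and $S$ to $(X_W,\nu,\sigma)$, and isomorphism and disjointness are isomorphism invariants, the two statements become questions about the symbolic systems; moreover $(X_V,\mu,\sigma^k)\cong T^k$ and $(X_W,\nu,\sigma^k)\cong S^k$ for every $k$, so the power-ergodicity clause in (2) is \emph{verbatim} condition (d) of Theorem~\ref{maindisjoint}. Because the parameters are canonical, $V$ and $W$ are non-degenerate, so Corollary~\ref{topiso} is available throughout.

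First I would dispose of the easy directions. For the forward implication of (1), if $s_n=t_n$ for all $n\geq N$, then Corollary~\ref{topiso} produces a topological conjugacy of $(X_V,\sigma)$ and $(X_W,\sigma)$, which is simultaneously a measure isomorphism, whence $T\cong S$. The reverse implications I would handle contrapositively. If $s_n\neq t_n$ for only finitely many $n$, then by part (1) $T\cong S$, and the graph of that isomorphism is a joining of $T$ and $S$ distinct from the product measure (both systems being ergodic and nonatomic); hence $T$ and $S$ are not disjoint, giving the necessity of the ``$s_n\neq t_n$ infinitely often'' clause of (2). The necessity of the power-ergodicity clause is exactly the observation recorded after Theorem~\ref{maindisjoint}: if $T^d$ and $S^d$ are both non-ergodic for some $d>1$, the systems share a cyclic factor of order $d$ and so are not disjoint.

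The substance lies in the remaining directions, and both rest on a single combinatorial step. Assuming $s_n\neq t_n$ for infinitely many $n$, to invoke Theorem~\ref{mainiso} (for the non-isomorphism in (1)) and Theorem~\ref{maindisjoint} (for the disjointness in (2)) I need their condition (c): incompatibility $s_n\perp t_n$ infinitely often. This does \emph{not} follow at the same scale — for Chacon-like parameters the unequal spacers $(0,1)$ and $(1,0)$ are in fact compatible. The remedy is to pass to a coarser generating sequence for the \emph{same} words: picking block boundaries $0=m_0<m_1<\cdots$ with bounded gaps and setting $v'_k=v_{m_k}$, $w'_k=w_{m_k}$ yields new commensurate parameters $(r'_k),(s'_k),(t'_k)$ for the identical systems. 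Regrouping preserves all the bounded hypotheses, since each entry of a block spacer is a single original spacer value (as every $v_{m_k}$ begins and ends with $0$, the separating runs never merge), hence still $\leq S$, while $r'_k=\prod_{m_k\leq n<m_{k+1}}r_n$ stays bounded. Thus everything goes through provided I can arrange $s'_k\perp t'_k$ for infinitely many $k$.

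Establishing this incompatibility-after-regrouping is where I expect the real difficulty to lie. The plan is to argue by contradiction: were the block spacers $s'_k,t'_k$ compatible for all large $k$ under every bounded regrouping, the attendant bounded family of witnessing ``shift amounts'' should be assembled into a replacement scheme, i.e.\ a topological conjugacy of $(X_V,\sigma)$ and $(X_W,\sigma)$, which by Corollary~\ref{topiso} would force $s_n=t_n$ eventually, a contradiction. Making this assembly rigorous is the crux: one must show the local shifts cohere across scales, and it is precisely here that non-degeneracy (equivalently, via Theorem~\ref{TC}, canonical boundedness) is essential, as it excludes the periodic configurations in which compatible-but-unequal words could persist without yielding a genuine conjugacy. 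Granting the incompatibility step, Theorem~\ref{mainiso} delivers the non-isomorphism in (1), and Theorem~\ref{maindisjoint} — with its condition (d) supplied by the power-ergodicity clause — delivers the disjointness in (2), completing both equivalences.
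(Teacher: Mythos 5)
Your reduction to the symbolic systems, the easy direction of (1) via a replacement scheme, and both necessity arguments in (2) all match the paper. You also correctly isolate the crux: the inequality $s_n\neq t_n$ must be upgraded to incompatibility $s_n\perp t_n$ after regrouping, and your Chacon example showing why this fails at the original scale is exactly right. But at that crux your proposal stops being a proof: you defer the incompatibility-after-regrouping step to a contradiction argument in which compatibility witnesses (``shift amounts'') at every scale, for every bounded regrouping, are ``assembled'' into a replacement scheme, and you yourself flag that making this cohere is unresolved. This is a genuine gap, and the assembly is not routine: compatibility of $s'_k$ and $t'_k$ only says that $t'_k$ occurs somewhere in $s'_k{}^\smallfrown(c)^\smallfrown s'_k$, and nothing forces these occurrences at scale $k$ to be consistent with those at scale $k+1$; extracting an actual conjugacy from them would be a new theorem, not an application of Corollary~\ref{topiso}.

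The paper avoids this entirely with a short combinatorial lemma and an \emph{adaptive} choice of regrouping. The lemma (Lemma~\ref{perp}) states: if $s\neq t$ have equal positive length and the words $s^\smallfrown(s'(1))^\smallfrown s\cdots{}^\smallfrown(s'(m))^\smallfrown s$ and $t^\smallfrown(t'(1))^\smallfrown t\cdots{}^\smallfrown(t'(m))^\smallfrown t$ are compatible, then $s'$ and $t'$ must both be constant (since $s\neq t$, the first occurrence of $t$ cannot line up with any occurrence of $s$, which forces every letter of $t'$ to equal one fixed letter of $s$). The regrouping is then chosen according to the data: set $n_{k+1}=n_k+1$ if $s_{n_k}=t_{n_k}$; otherwise set $n_{k+1}=n_k+2$ when $s_{n_k+1}$ is non-constant, and $n_{k+1}=n_k+3$ when it is constant. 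In the $+2$ case Lemma~\ref{perp} immediately yields $s'_k\perp t'_k$ because the outer spacer word $s_{n_k+1}$ is non-constant; in the $+3$ case the outer word describes how $v_{n_k+3}$ is built from $v_{n_k+1}$, and if it were constant we would have $v_{n_k+1}\prec_s v_{n_k+3}$, contradicting that $v_{n_k+2}$ lies on the canonical generating sequence. That is the only place canonicity is used --- a one-line consequence of its definition, not the rigidity/coherence principle your sketch would require. Replacing your compactness-style assembly with this lemma and the adaptive regrouping is what is needed to close the argument; the rest of your outline then goes through as you describe.
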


As in Theorem~\ref{maindisjoint}, if $D$ is an upper bound for the sequences $(s_n:n\in\N)$ and $(t_n:n\in\N)$, then clause (2) can be strengthened to
\begin{enumerate}
\item[(2')] $T$ and $S$ are disjoint if and only if for infinitely many $n\in\N$, $s_n\neq t_n$ and for every integer $1<d\leq D$, either $T^d$ is ergodic or $S^d$ is ergodic.
\end{enumerate}

The rest of this subsection is devoted to a proof of Corollary~\ref{CBmain}. 

Let $(v_n:n\in\N)$ be the generating sequence given by the cutting and spacer parameters $(r_n:n\in\N)$ and $(s_n:n\in\N)$. Let $V=\lim_n v_n$. Then $T$ is isomorphic to the symbolic rank-one system $(X_V, \mu, \sigma)$ for a uniquely ergodic Borel probability measure $\mu$. So we will assume that $T$ is $(X, \mu, \sigma)$. Let $(w_n:n\in\N)$ be the generating sequence given by the cutting and spacer parameters $(q_n: n\in\N)$ and $(t_n:n\in \N)$. Let $W=\lim_n w_n$. We will similarly assume that $S$ is the symbolic rank-one system $(X_W, \nu, \sigma)$ for a suitable measure $\nu$. By commensurability, we have that for all $n\in\N$, $q_n=r_n$ and $\lh(v_n)=\lh(w_n)$.

First consider isomorphism. The condition is sufficient since it gives a replacement scheme, which in turn gives rise to a topological conjugacy which is also a measure-theoretic isomorphism. More specifically, if for all $n\geq N$, $s_n=t_n$, then $(v_N, w_N)$ is a replacement scheme, and the topological conjugacy it induces is an isomorphism between $T$ and $S$. 

For the necessity, assume that for infinitely many $n\in\N$, $s_n\neq t_n$. Before proceeding with the proof we prove a basic fact about compatibility.

\begin{lemma}\label{perp} Let $s, t, s', t'\in \mathcal{S}$. Suppose $s\neq t$, $\lh(s)=\lh(t)=l>0$, and $\lh(s')=\lh(t')=m>0$. Assume the following two words are compatible:
\begin{equation}\label{s*}
{s}^\smallfrown ({s'(1)})^\smallfrown {s}^\smallfrown\dots {}^\smallfrown{s}^\smallfrown (s'(m))^\smallfrown s \end{equation}
\begin{equation}\label{t*}
{t}^\smallfrown (t'(1))^\smallfrown {t}^\smallfrown\dots {}^\smallfrown{t}^\smallfrown (t'(m))^\smallfrown t. \end{equation}
Then $s'$ and $t'$ are both constant words.
\end{lemma}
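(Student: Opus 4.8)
The plan is to read off the rigid block structure of the two words and then use the defining property of compatibility to force an alignment of these blocks. Write $A$ for the word (\ref{s*}) and $B$ for the word (\ref{t*}). Both have length $L=(m+1)(l+1)-1$, and both are periodic with period $l+1$ in the following sense: in $A$ the $j$-th copy of $s$ occupies the positions of residues $1,\dots,l$ within the $j$-th block, while the separator symbol $s'(j)$ sits at the position of residue $0$ modulo $l+1$; the same holds for $B$ with $t$ and $t'$. By the definition of compatibility, $B$ occurs as a subword of $A^\smallfrown(c)^\smallfrown A$ for some $c\in\N$; I would fix such an occurrence and let $\delta\geq 0$ be its offset, so that $B(i)=(A^\smallfrown(c)^\smallfrown A)(i+\delta)$ for $1\leq i\leq L$. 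Set $\rho\equiv\delta\pmod{l+1}$ with $0\leq\rho\leq l$. The crucial observation is that the single inserted symbol $c$ occupies position $L+1=(m+1)(l+1)$ of $A^\smallfrown(c)^\smallfrown A$, which is $\equiv 0\pmod{l+1}$; every position of $A^\smallfrown(c)^\smallfrown A$ of nonzero residue modulo $l+1$ carries a letter of some copy of $s$, and its residue records which letter of $s$ appears there.

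First I would show $\rho\neq 0$. If $\rho=0$, then the block structures of $B$ and of $A^\smallfrown(c)^\smallfrown A$ are aligned: each copy of $t$ in $B$ occupies $l$ consecutive positions of residues $1,\dots,l$, which map under the offset onto a full copy of $s$ (all these target positions have nonzero residue, so none of them is the position occupied by $c$). Comparing letters gives $t=s$, contradicting the hypothesis $s\neq t$. Hence $1\leq\rho\leq l$.

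With $1\leq\rho\leq l$ in hand, the conclusion for $t'$ is immediate. Each separator of $B$ sits at a $B$-position $j(l+1)$ with $1\leq j\leq m$, hence maps to position $j(l+1)+\delta$ of $A^\smallfrown(c)^\smallfrown A$, whose residue modulo $l+1$ equals $\rho\neq 0$. In particular this target is never the position of $c$ (which has residue $0$), so it carries the letter $s(\rho)$. Therefore $t'(j)=s(\rho)$ for every $1\leq j\leq m$, and $t'$ is constant. Finally, since compatibility is symmetric on words of equal length, $A$ likewise occurs as a subword of $B^\smallfrown(c')^\smallfrown B$ for some $c'\in\N$, and the identical argument with the roles of $(s,s')$ and $(t,t')$ interchanged shows that $s'$ is constant as well.

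The step I expect to be most delicate is the bookkeeping around the single inserted symbol $c$: the whole argument hinges on the fact that $c$ sits at a block-boundary position of residue $0$, so that it can never coincide with any of the $m$ separator positions of $B$, all of which have residue $\rho\neq 0$. This is precisely what forces all $m$ values $t'(j)$ to be equal, rather than merely all but one of them; without isolating this point one is tempted to worry about a separator being "absorbed" by the wildcard, which the residue computation rules out.
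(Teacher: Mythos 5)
Your proof is correct and is essentially the same argument as the paper's: the paper also fixes the occurrence of the word (\ref{t*}) inside $u^\smallfrown(c)^\smallfrown u$, observes that the first copy of $t$ cannot align with any demonstrated copy of $s$ (else $s=t$), deduces that the first separator $t'(1)$ therefore lands on some letter $s(j)$ inside a copy of $s$ and that all later separators land on the same letter, and then invokes symmetry for $s'$. Your explicit residue-mod-$(l+1)$ bookkeeping (including checking that the wildcard $c$ sits at a residue-$0$ position) just makes rigorous the steps the paper compresses into ``it follows that.''
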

\begin{proof}
Let $u$ be the word in (\ref{s*}) and $z$ be the word in (\ref{t*}). Suppose $z$ is a subword of $u^\smallfrown (c)^\smallfrown u$ for some $c\in\N$. Since $s\neq t$, the first occurrence of $t$ in $z$ cannot line up with any occurrence of $s$ in $u$, i.e., in the occurrence of $z$ in $u^\smallfrown(c)^\smallfrown u$, the starting position of the first occurrence of $t$ is not the same as the starting position of any demonstrated occurrence of $s$. Since $\lh(s)=\lh(t)=l>0$, this implies that there is $1\leq j\leq l$ such that $t'(1)=s(j)$. But then it follows that $t'(2)=\dots=t'(m)=s(j)$. Thus $t'$ is constant. By symmetry, $s'$ is also constant.
\end{proof}

Now back to the proof of Corollary~\ref{CBmain} (1). We have assumed that there are infinitely many $n\in\N$ with $s_n\neq t_n$. We inductively define an infinite sequence $(n_k: k\in\N)$ of natural numbers as follows. Define $n_0=0$. In general, assume $n_k$, $k\geq 0$, has been defined. Define $n_{k+1}=n_k+1$ if $s_{n_k}=t_{n_k}$. Otherwise, $s_{n_k}\neq t_{n_k}$, and we define $n_{k+1}=n_k+2$ if $s_{n_k+1}$ is not constant, and define $n_{k+1}=n_k+3$ otherwise. Let $v'_k=v_{n_k}$ and $w'_k=w_{n_k}$ for all $k\in\N$. Then $(v'_n:n\in\N)$ is a subsequence of $(v_n:n\in\N)$ giving rise to $T$ and $(w'_n:n\in\N)$ is a subsequence of $(w_n:n\in\N)$ giving rise to $S$. Let $(r'_n:n\in\N)$ and $(s'_n:n\in\N)$ be the cutting and spacer parameters correspondent to $(v'_n:n\in\N)$. Let $(q'_n:n\in\N)$ and $(t'_n:n\in\N)$ be the cutting and spacer parameters correspondent to $(w'_n:n\in\N)$. It is clear that the newly defined parameters are commensurate. We claim that the newly defined parameters for $T$ and $S$ satisfy all the other hypotheses of Theorem~\ref{mainiso}. Thus $T$ and $S$ are not isomorphic.

To verfity the claim, first note that $n_k< n_{k+1}\leq n_k+3$ for all $k\in\N$. This implies boundedness of the newly defined cutting and spacer parameters. In fact, if $R$ is a bound for $(r_n:n\in\N)$, then $R^3$ is a bound for $(r'_n:n\in\N)$. If $S$ is a bound for $(s_n:n\in\N)$, then $S$ is still a bound for $(s'_n:n\in\N)$. 

It remains to verify that for infinitely many $k\in\N$, $s'_k\perp t'_k$. By our construction of the sequence $(n_k:k\in\N)$, there are infinitely many $k$ such that either $n_{k+1}=n_k+2$ or $n_{k+1}=n_k+3$. We claim that for each of these $k$ we have $s'_k\perp t'_k$. First suppose $k$ is such that $n_{k+1}=n_k+2$. By our construction this means that $s_{n_k}\neq t_{n_k}$ and $s_{n_k+1}$ is not constant. In this case, we have
$$ s'_k={s_{n_k}}^\smallfrown (s_{n_k+1}(1))^\smallfrown {s_{n_k}}^\smallfrown \dots {}^\smallfrown (s_{n_k+1}(r_{n_k+1}-1))^\smallfrown s_{n_k} $$
and
$$  t'_k={t_{n_k}}^\smallfrown (t_{n_k+1}(1))^\smallfrown{t_{n_k}}^\smallfrown \dots {}^\smallfrown (t_{n_k+1}(r_{n_k+1}-1))^\smallfrown t_{n_k}.$$
By Lemma~\ref{perp}, $s'_k\perp t'_k$. Next suppose $k$ is such that $n_{k+1}=n_k+3$. By our construction this means that $s_{n_k}\neq t_{n_k}$ and $s_{n_{k+1}}$ is constant. A similar application of Lemma~\ref{perp} will complete the proof, provided that we verify the word
$$ {s_{n_k+1}}^\smallfrown (s_{n_k+2}(1))^\smallfrown {s_{n_k+1}}^\smallfrown \dots {}^\smallfrown (s_{n_k+2}(r_{n_k+2}-1))^\smallfrown s_{n_k+1} $$
is not constant. Assume it is. Note that this sequence correspond to the way $v_{n_k+3}$ is built from $v_{n_k+1}$. Thus $v_{n_k+1}\prec_s v_{n_k+3}$ and $v_{n_k+2}$ is not on the canonical generating sequence. This contradicts our assumption that $(v_n: n\in\N)$ is a canonical generating sequence.

We have thus shown Corollary~\ref{CBmain} (1). For Corollary~\ref{CBmain} (2), the necessity of the condition is clear (c.f. the remarks after the statement of Theorem~\ref{maindisjoint}). For the sufficiency, it is enough to construct new pairs of cutting and spacer parameters as above, and apply Theorem~\ref{maindisjoint}.

\section{Minimal self-joinings and Ryzhikov's theorem}

\subsection{Minimal self-joinings}
\begin{theorem}\label{mainMSJ}
Let $(r_n: n \in \N)$ and $(s_n : n \in \N)$ be cutting and spacer parameters giving rise to symbolic rank-one system $(X, \mu, \sigma)$.  Suppose the following hold. 
\begin{enumerate}
\item[\rm (a)]  For some $R$ and all $n$, $r_n \leq R$.
\item[\rm (b)]  For some $S$ and all $n$ and all $0 < i< r_n$, $s_n(i) \leq S$.
\item[\rm (c)]  For all $n$ and all $c \in \N$, there are only two occurrence of $s_n$ in ${s_n}^\smallfrown(c)^\smallfrown s_n$.
\item[\rm (d)]  $(X, \mu, \sigma)$ is totally ergodic.
\end{enumerate}
Then $(X, \mu, \sigma)$ has minimal self-joinings of all orders.
\end{theorem}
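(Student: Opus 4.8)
The plan is to prove that $(X,\mu,\sigma)$ has minimal self-joinings of all orders by establishing the pairwise case first and then bootstrapping to higher orders, following the framework of del Junco, Rahe, and Swanson. For minimal self-joinings of order $2$, I would need to rule out two things about any ergodic self-joining $\overline{\mu}$ of $\mu$ with itself: first, that it is supported on the graph of a power of $\sigma$, and second, combining with the off-diagonal graphs, that otherwise $\overline{\mu}=\mu\times\mu$. The key technical engine will be Lemma~\ref{tech} and the remarks following it, exactly as in the proof of Theorem~\ref{maindisjoint}. The crucial new ingredient is hypothesis (c): the condition that $s_n$ occurs only twice in ${s_n}^\smallfrown(c)^\smallfrown s_n$ is precisely a statement that $s_n$ is ``incompatible with its own shifts,'' which is the self-joining analogue of the condition $s_n\perp t_n$ used in the disjointness theorem. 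This should let me run the same overlap-and-mismatch construction from the disjointness proof, but now with both coordinates built from the same generating sequence $(v_n)$.

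The main steps, in order, would be as follows. First, I would set up a generic point $(x,y)$ for an ergodic self-joining $\overline{\mu}$ satisfying the two-sided ergodic theorem, exactly as in the setup preceding Lemma~\ref{tech}. Second, I would argue that either $\overline{\mu}$ is a graph joining coming from an element of the centralizer composed with a power of $\sigma$, or else I can produce the data $a_n,b_n,c_n,d_n,e_n$ and some nonzero $k_n$ in a bounded window satisfying (i)--(iv) of Lemma~\ref{tech}; total ergodicity (hypothesis (d)) then supplies the ergodicity of $\sigma^k$ needed for step (v), forcing $\overline{\mu}=\mu\times\mu$. The dichotomy is where hypothesis (c) enters: when aligning an expected occurrence of $v_n$ in the second coordinate against the nearby expected occurrences of $v_n$ in the first coordinate, the spacer words read off from $x$ and from $y$ must either agree identically (the graph/off-diagonal case) or exhibit a mismatch; condition (c) guarantees that a genuine shift produces a mismatch, which drives the construction of $k_n$ and $e_n$ precisely as in the disjointness argument. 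The only surviving joinings are then $\mu\times\mu$ together with the off-diagonal measures $(\sigma^j\times\id)_*\Delta$, which is exactly the definition of $2$-fold minimal self-joinings.

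For minimal self-joinings of all orders, I would then pass from the pairwise statement to the general order-$n$ statement. This is the standard inductive step: one shows that an ergodic $n$-fold self-joining, when restricted to any pair of coordinates, is either a product or an off-diagonal (by the order-$2$ result), and then an argument using the pairwise-independence-or-coincidence structure forces the whole joining to be a product of off-diagonals grouped into blocks. Since total ergodicity of $(X,\mu,\sigma)$ rules out the possibility that $\sigma$ has nontrivial rational spectrum, and since the rank-one hypothesis together with condition (c) forces the centralizer of $\sigma$ to consist only of powers of $\sigma$ itself, the grouping is the cleanest possible and yields genuine minimal self-joinings rather than a weaker ``$2$-fold implies all orders'' conclusion requiring an independent argument. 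I expect the main obstacle to be the careful bookkeeping in the alignment step: in the self-joining setting the two coordinates come from the \emph{same} word $V$, so I must verify that the least-mismatch index $i_0$ extracted from condition (c) really does produce a nonzero $k_n$ in the window $[-S,S]$ and that the overlap interval $[c_n,d_n]$ retains a definite fraction $\alpha$ of $[a_n,b_n]$; ensuring the shift $k_n$ is genuinely nonzero (so that one is not merely rediscovering the diagonal) is the delicate point, and this is exactly what hypothesis (c) is designed to guarantee.
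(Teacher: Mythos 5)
Your outline of the order-$2$ argument is in the same spirit as the paper's proof (both follow del Junco--Rahe--Swanson via Lemma~\ref{tech}), but your claim that the construction then runs ``precisely as in the disjointness argument'' hides the actual difficulty, and the dichotomy you propose is wrong as stated. In the proof of Theorem~\ref{maindisjoint}, the hypothesis $s_n\perp t_n$ produces a mismatch at \emph{every} level $n\in D$ no matter how the two grids of expected occurrences sit relative to one another, so one may simply align against the nearest expected occurrence of $v_n$. In the self-joining setting, condition (c) yields a mismatch only when the block of $r_n$ consecutive expected occurrences of $v_n$ in $x$ being compared straddles two expected occurrences of $v_{n+1}$, i.e.\ only at levels $n$ where the positions of $0$ inside the $v_{n+1}$-grids of $x$ and $y$ differ; at levels where the two grids happen to align, no nonzero $k_n$ can be extracted, and agreement of the spacer words at one level says nothing about the joining being an off-diagonal. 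The paper's proof therefore introduces genuinely new machinery with no analogue in the disjointness proof: labeling functions $\lambda_n,\kappa_n$; Lemma~\ref{orbit}, showing that points not in the same $\sigma$-orbit have $\lambda_n(x)\neq\lambda_n(y)$ for infinitely many $n$; and, to secure the overlap fraction in condition (iii) of Lemma~\ref{tech}, the case analysis of Lemma~\ref{cases}, which requires either matching labels at level $n-1$ (Lemma~\ref{same}) or a ``central'' label $\kappa_{n-1}(x)=0$ (Lemma~\ref{center}), the latter supplied with positive density by a law-of-large-numbers argument (Lemma~\ref{density}) after first reducing to $r_n\geq 3$ (Lemma~\ref{cut}). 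You correctly flag the nonvanishing of $k_n$ and the overlap fraction $\alpha$ as the delicate points, but condition (c) alone does not deliver them; the probabilistic and combinatorial lemmas above are what do, and they are absent from your plan.

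The more serious gap is your passage from order $2$ to all orders. The ``standard inductive step'' you describe --- restrict an ergodic $n$-fold self-joining to pairs, get product-or-off-diagonal, and conclude the whole joining is a product of off-diagonals grouped into blocks --- is precisely what fails in general: pairwise independence of a joining does not imply joint independence, and whether $2$-fold minimal self-joinings implies $3$-fold is not a formality for a general transformation. The paper handles this with Theorem~\ref{allorders}, which is special to rank-one transformations and is not elementary: it combines Glasner's inductive argument (weak mixing plus MSJ of order $3$ gives all orders), Ryzhikov's theorem ($2$-mixing plus MSJ of order $2$ gives all orders), and Kalikow's theorem (a mixing rank-one transformation is $2$-mixing) to conclude that a rank-one transformation with MSJ of order $2$ has MSJ of order $3$ and hence of all orders. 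Your assertion that the blockwise structure ``yields genuine minimal self-joinings rather than a weaker `$2$-fold implies all orders' conclusion requiring an independent argument'' gets this backwards: the independent argument is exactly what is needed, and neither total ergodicity nor triviality of the centralizer substitutes for it.
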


First we note a well-known fact that for rank-one transformations, having minimal self-joinings of order 2 implies minimal self-joinings of all orders. We thank Eli Glasner for providing us the references and for allowing us to include the argument here for the benefit of the reader.

\begin{theorem}\label{allorders} If a rank-one transformation has minimal self-joinings of order 2, then it has minimal self-joinings of all orders.
\end{theorem}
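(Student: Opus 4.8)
The plan is to prove this via the theory of simple systems in the sense of del Junco and Rudolph, reducing the all-orders statement to their theorem that two-fold simplicity propagates to all orders. Recall that an $n$-fold self-joining of $(X,\mu,\sigma)$ is a $\sigma^{\times n}$-invariant probability measure on $X^n$ with each one-dimensional marginal equal to $\mu$, and that a system is $n$-fold simple if every ergodic $n$-fold self-joining is supported on graphs built from elements of the centralizer $C(\sigma)$, distributed over a partition of the coordinates into mutually independent clusters. Minimal self-joinings of order $n$ is exactly the special case of $n$-fold simplicity in which the only admissible graphs are those of powers of $\sigma$, i.e. in which $C(\sigma)$ is trivial.

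First I would observe that the hypothesis of minimal self-joinings of order $2$ already forces the centralizer to be trivial. Indeed, if $S\in C(\sigma)$, then the graph of $S$ is an ergodic two-fold self-joining that is not the product $\mu\times\mu$; by minimal self-joinings of order $2$ it must coincide with the graph of some power $\sigma^k$, whence $S=\sigma^k$. Thus $C(\sigma)=\{\sigma^k : k\in\Z\}$, and minimal self-joinings of order $2$ is precisely the statement that $(X,\mu,\sigma)$ is two-fold simple. For a rank-one transformation the triviality of the centralizer is in any case guaranteed by King's Weak Closure Theorem, but the graph argument shows it is already a formal consequence of the order-$2$ hypothesis.

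Next I would invoke the theorem of del Junco and Rudolph that a two-fold simple system is $n$-fold simple for every $n$. Applying it here, $(X,\mu,\sigma)$ is $n$-fold simple for all $n$. Since we have already identified the centralizer as trivial, the only graphs available in the description of the ergodic $n$-fold self-joinings are graphs of powers of $\sigma$; hence every ergodic $n$-fold self-joining is a product of off-diagonals over a partition of the coordinates. This is exactly minimal self-joinings of order $n$, and since $n$ was arbitrary the transformation has minimal self-joinings of all orders.

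The main obstacle is of course the del Junco--Rudolph propagation theorem itself, whose proof is an induction on $n$ that analyzes the two-dimensional marginals of an ergodic $n$-fold self-joining and reassembles them into the cluster structure; but since we may quote it, the only genuine work on our side is the translation between the minimal-self-joinings formulation and the simplicity formulation, namely verifying that order-$2$ minimal self-joinings supplies both two-fold simplicity and a trivial centralizer, and conversely that $n$-fold simplicity with trivial centralizer is minimal self-joinings of order $n$. Both implications are immediate from the definitions once the graph-of-a-commuting-map argument above is in place.
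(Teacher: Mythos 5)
The decisive step in your argument --- ``the theorem of del Junco and Rudolph that a two-fold simple system is $n$-fold simple for every $n$'' --- does not exist. Whether $2$-fold simplicity implies $3$-fold simplicity (equivalently, in the trivial-centralizer case you reduce to, whether minimal self-joinings of order $2$ implies order $3$) is a well-known open problem, tied to the pairwise-independence problem for triple self-joinings and to Rokhlin's question of whether $2$-fold mixing implies $3$-fold mixing. What del Junco--Rudolph, and later Glasner--Host--Rudolph, actually prove are propagation results starting from order $3$, or from order $2$ together with an additional mixing-type hypothesis; order $2$ alone is not known to suffice for any general class. The translation work in your proposal is fine: the graph-joining argument showing that order-$2$ minimal self-joinings forces a trivial centralizer is correct, as is the equivalence between $n$-fold simplicity with trivial centralizer and minimal self-joinings of order $n$. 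But the engine driving the induction from $2$ to all orders is exactly what is missing.

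The red flag you should have caught is that your argument never uses the hypothesis that the transformation is rank-one: if it were correct, it would prove that \emph{every} measure-preserving transformation with minimal self-joinings of order $2$ has minimal self-joinings of all orders, which is open. The paper's proof concentrates all its effort, and the rank-one hypothesis, on precisely the step you delegate to the non-existent theorem, namely getting from order $2$ to order $3$. It quotes Ryzhikov's theorem \cite{Ryzhikov0} that a $2$-mixing transformation with minimal self-joinings of order $2$ has minimal self-joinings of all orders, and the inductive argument in \cite{GlasnerBook} that a weakly mixing transformation with minimal self-joinings of order $3$ has them of all orders; combining these, a transformation with minimal self-joinings of order $2$ but not of order $3$ would have to be mixing but not $2$-mixing. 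Kalikow's theorem \cite{Kalikow} says a mixing rank-one transformation is automatically $2$-mixing, so this cannot happen for rank-one transformations, and the order-$3$ property (hence, by weak mixing, all orders) follows. Your reduction to simplicity is a reasonable framework, but to make it work you would still have to supply this same bridge from order $2$ to order $3$, which is where the actual content of the theorem lies.
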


\begin{proof} An inductive argument (c.f. \cite{GlasnerBook} Theorem 12.16) shows that for any weakly mixing transformation, having minimal self-joinings of order 3 implies minimal self-joinings of all orders.
A theorem of Ryzhikov \cite{Ryzhikov0} states that a 2-mixing measure-preserving transformation with minimal self-joinings of order 2 has minimal self-joinings of all orders. It follows that if a transformation has minimal self-joinings of order 2 but not order 3, then it is mixing but not 2-mixing (c.f. \cite{GlasnerBook} Corollary 12.22). A theorem of Kalikow \cite{Kalikow} states that any mixing rank-one transformation is also $2$-mixing (and in fact $k$-mixing for all $k>1$). Thus one concludes that a rank-one transformation with minimal self-joinings of order 2 also has minimal self-joinings of order 3. Since having minimal self-joinings of order 2 implies weakly mixing, such a transformation has minimal self-joinings of all orders. 
\end{proof}

The above theorem is well-known to experts in the field and the references provided here are not meant to be exhaustive. For instance, the theorem was mentioned in \cite{Ryzhikov} (without proof or further references). A weaker form of the theorem was mentioned in \cite{King88}, which is sufficient for our purpose since we only consider bounded rank-one transformations, which are not mixing.

As in Theorem~\ref{maindisjoint} and Corollary~\ref{CBmain}, condition (d) of Theorem~\ref{mainMSJ} can be weakened to 
\begin{enumerate}
\item[(d')] For each $1<k\leq S$, where $S$ is the bound from condition (b), $(X, \mu, \sigma^k)$ is ergodic.
\end{enumerate}
This will be clear from the proof below.

The rest of this subsection is devoted to a proof of Theorem~\ref{mainMSJ} for minimal self-joinings of order 2. We again follow the approach of del Junco, Rahe, and Swanson \cite{delJuncoRaheSwanson} in their proof of minimal self-joinings for Chacon's transformation, as presented by Rudolph in his book \cite{RudolphBook}, Section 6.5.

Let $(v_n:n\in\N)$ be the generating sequence given by the cutting and spacer parameters $(r_n:n\in\N)$ and $(s_n:n\in\N)$. 

\begin{lemma}\label{cut} Without loss of generality, we may assume $r_n\geq 3$ for all $n\in\N$.
\end{lemma}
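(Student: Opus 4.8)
The plan is to reduce the case $r_n = 2$ to the case $r_n \geq 3$ by passing to a new generating sequence obtained from the original one by grouping consecutive stages. The key observation is that if $r_n = 2$ for some $n$, then we can ``compose'' the construction of $v_{n+1}$ from $v_n$ with the construction of $v_{n+2}$ from $v_{n+1}$ (and continue grouping until we accumulate a large enough cutting number). Specifically, I would show that from any generating sequence $(v_n : n \in \N)$ giving rise to $(X, \mu, \sigma)$ we can extract a subsequence $(v_{n_k} : k \in \N)$ whose associated cutting parameters $r'_k$ all satisfy $r'_k \geq 3$, where $r'_k = \prod_{i=n_k}^{n_{k+1}-1} r_i$ is the product of the grouped cutting numbers. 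Since each $r_n \geq 2$, grouping any two consecutive stages already yields a cutting number that is a product of at least two factors each $\geq 2$, hence $\geq 4 \geq 3$; so one can always choose $n_{k+1} \in \{n_k + 1, n_k + 2\}$ to guarantee $r'_k \geq 3$.

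The main point to verify is that this regrouping preserves all four hypotheses of Theorem~\ref{mainMSJ}, so that proving the theorem under the extra assumption $r_n \geq 3$ genuinely suffices. First I would check that $(v_{n_k} : k \in \N)$ is itself a legitimate generating sequence for the \emph{same} system $(X, \mu, \sigma)$: this is immediate since $v_{n_k} \prec v_{n_{k+1}}$ and $\lim_k v_{n_k} = \lim_n v_n = V$, so $X_V$ and $\mu$ are unchanged. Condition (a) is preserved because $r'_k = \prod_{i=n_k}^{n_{k+1}-1} r_i \leq R^2$ (as we group at most two stages), giving a new uniform bound $R^2$. Condition (d), total ergodicity, is a property of the transformation itself and does not depend on the choice of generating sequence, so it is automatic.

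The two conditions requiring genuine attention are (b) and (c), since the new spacer words $s'_k$ are longer than the original $s_n$. The new spacer $s'_k$ records how $v_{n_{k+1}}$ is built from $v_{n_k}$; writing out $v_{n_k+1} = v_{n_k} 1^{s_{n_k}(1)} v_{n_k} \cdots$ and substituting into the next stage shows that every entry of $s'_k$ is either an original spacer value $s_m(i)$ (hence $\leq S$) or arises from a single opportunity between blocks and is again bounded by some $s_m(i) \leq S$; thus (b) survives with the same bound $S$. For condition (c), I would argue that the ``two occurrences'' property is inherited: an occurrence of $s'_k$ inside ${s'_k}^\smallfrown (c)^\smallfrown s'_k$ would, by the nested block structure, force a matching occurrence of the constituent $s_m$ inside ${s_m}^\smallfrown (c')^\smallfrown s_m$ at the finest level, and the original hypothesis (c) for each $s_m$ rules out any occurrence beyond the two expected ones.

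The hard part will be condition (c): one must track occurrences through the grouped block structure carefully, ensuring that a spurious occurrence of the long word $s'_k$ in ${s'_k}^\smallfrown (c)^\smallfrown s'_k$ cannot arise from a misaligned overlap that is nonetheless consistent at every level. The natural approach is to use the rigidity of the $v_n$-block decomposition: because each $v_n$ starts and ends with $0$ and the blocks are recursively self-similar, any occurrence of $v_{n_{k+1}}$ must align its constituent $v_{n_k}$-blocks with expected occurrences, which pins down the positions and reduces the problem to the base-level condition (c). Once all four hypotheses are seen to transfer, Lemma~\ref{cut} follows, and the remainder of the proof may proceed under the simplifying assumption $r_n \geq 3$.
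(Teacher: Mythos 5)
Your overall strategy coincides with the paper's: regroup consecutive stages of the construction (the paper simply takes $v'_n=v_{2n}$, so every new cutting number is $r_{2n}r_{2n+1}\geq 4$) and check that hypotheses (a)--(d) of Theorem~\ref{mainMSJ} survive the regrouping. Your treatment of (a), (b) and (d) is correct and matches the paper's, and your variant of grouping only when needed is an inessential difference. The genuine gap is in condition (c), which you correctly single out as the hard part but do not actually prove. Your proposed route is to deduce it from a rigidity property of the \emph{binary} words: that ``any occurrence of $v_{n_{k+1}}$ must align its constituent $v_{n_k}$-blocks with expected occurrences'' because each $v_n$ starts and ends with $0$ and the blocks are self-similar. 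That claim is false without further hypotheses (for instance, with $v_0=0$ and $s_0=s_1=(0)$ one has $v_2=0000$, and occurrences of $v_2$ in $v_2v_2$ can be misaligned at every level); alignment rigidity of this kind is precisely what hypotheses like (c) are designed to provide, so invoking it here is circular unless you derive it from (c), and no such derivation is given. Moreover, condition (c) is a statement about the spacer words over $\N$, and no passage through the binary words is needed at all.

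The correct argument stays entirely at the level of the spacer sequences. Writing $s'_n={s_{2n}}^\smallfrown(s_{2n+1}(1))^\smallfrown{s_{2n}}^\smallfrown\cdots{}^\smallfrown(s_{2n+1}(r_{2n+1}-1))^\smallfrown s_{2n}$, suppose toward a contradiction that $s'_n$ has a hidden (non-demonstrated) occurrence in ${s'_n}^\smallfrown(c)^\smallfrown s'_n$. The word ${s'_n}^\smallfrown(c)^\smallfrown s'_n$ is an alternation of $s_{2n}$-blocks and single separator letters, and $s'_n$ begins with an $s_{2n}$-block. If the hidden occurrence did not begin at the starting position of a demonstrated $s_{2n}$-block, its initial $s_{2n}$-block would produce a third occurrence of $s_{2n}$ inside some ${s_{2n}}^\smallfrown(d)^\smallfrown s_{2n}$, contradicting (c) at stage $2n$. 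Hence every $s_{2n}$-block of the hidden occurrence aligns with a demonstrated one, and reading off the separator letters between consecutive blocks then exhibits an occurrence of $s_{2n+1}$ in ${s_{2n+1}}^\smallfrown(c)^\smallfrown s_{2n+1}$ other than the two demonstrated ones, contradicting (c) at stage $2n+1$. This two-level comparison, carried out on the spacer words rather than on the $v_n$'s, is the step your sketch is missing.
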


\begin{proof} Simply consider the subsequence $(v'_n:n\in\N)$ defined as $v'_n=v_{2n}$ for all $n\in\N$. Then $r'_n=r_{2n}r_{2n+1}\geq 4$ is the new cutting parameter, and the new spacer parameter $s'_n$ is 
\begin{equation}\label{s} {s_{2n}}^\smallfrown(s_{2n+1}(1))^\smallfrown {s_{2n}}^\smallfrown(s_{2n+1}(2))^\smallfrown\dots ^\smallfrown{s_{2n}}^\smallfrown (s_{2n+1}(r_{2n+1}-1))^\smallfrown s_{2n}. \end{equation}
If $R$ is the bound for $r_n$ in condition (a), then $r'_n\leq R^2$. If $S$ is the bound for all $s_n(i)$ in condition (b), $S$ is still a bound for all $s'_n(j)$. Since $\lim_n v_n=\lim_n v'_n$, condition (d) continues to hold. It remains only to verify that condition (c) continues to hold for $s'_n$. 

Towards a contradiction, suppose $s'_n$, which is in the form given by (\ref{s}), occurs in ${s'_n}^\smallfrown (c)^\smallfrown s'_n$ not as demonstrated. We refer to this occurrence of $s'_n$ as the {\em hidden occurrence}. Note that $s'_n$ starts with an occurrence of $s_{2n}$. Thus the hidden occurrence of $s'_n$ must start at a position where an expected occurrence of $s_{2n}$ in ${s'_n}^\smallfrown (c)^\smallfrown s'_n$ begins, because otherwise we get that $s_{2n}$ occurs in some ${s_{2n}}^\smallfrown (d)^\smallfrown s_{2n}$ not as demonstrated, contradicting our condition (c). In other words, all expected occurrence of $s_{2n}$ in the hidden occurrence of $s'_n$ must be already demonstrated in the form given by (\ref{s}). By comparison, we get that $s_{2n+1}$ occurs in ${s_{2n+1}}^\smallfrown (c)^\smallfrown s_{2n+1}$ not as demonstrated, again contradicting condition (c).
\end{proof}

For the rest of the proof we assume that $r_n\geq 3$ for all $n\in\N$.

Let $E_0$ be the set of all $x\in X$ for which there is $n\in\N$ such that the position 0 is contained in an expected occurrence of $v_n$ in $x$. Let $E=\bigcap_{k\in\Z} \sigma^k[E_0]$. Then $\mu(E)=1$. In fact, by condition (b), $X\setminus E_0$ is finite. Thus $X\setminus E$ is at most countable. 

We define a labeling function $\lambda_n: E\to \{1, \dots, r_n, \infty\}$ for each $n\in\N$. Let $n\in\N$ and $x\in E$ be given. If the position $0$ is not contained in an expected occurrence of $v_n$ in $x$, put $\lambda_n(x)=\infty$. Otherwise, the position $0$ is contained in an expected occurrence of $v_n$ in $x$, and it follows that the expected occurrence of $v_n$ (containing the position 0) is in turn contained in an expected occurrence of $v_{n+1}$ in $x$. Since there are exactly $r_n$ many expected occurrence of $v_n$ in $v_{n+1}$, we may speak of the $i$-th occurrence of $v_n$ in $v_{n+1}$ for $1\leq i\leq r_n$. Now put $\lambda_n(x)=i$ if the expected occurrence of $v_n$ containing position 0 is the $i$-th occurrence of $v_n$ in the expected occurrence of $v_{n+1}$ in $x$ containing the position 0. For any $x\in E$, $\lambda_n(x)<\infty$ for large enough $n$. We prove some basic facts about the labeling functions.

\begin{lemma}\label{orbit} If $x, y\in E$ are such that $\lambda_n(x)=\lambda_n(y)$ for all $n\geq N$ for some $N\in\N$, then $x$ and $y$ are in the same $\sigma$-orbit, i.e., there is $k\in\Z$ such that $\sigma^k(x)=y$. 
\end{lemma}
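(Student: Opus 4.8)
The plan is to extract from the labels the coordinate of position $0$ inside the nested tower of expected occurrences, to show that matching labels force these coordinates for $x$ and $y$ to differ by a fixed integer $c$, and then to verify that $\sigma^c(x)=y$.

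First I would reduce to the case where, for all $n\geq N$, both $\lambda_n(x)$ and $\lambda_n(y)$ are finite; this is harmless because $x,y\in E$ force $\lambda_n<\infty$ for all large $n$, and the hypothesis supplies equality for $n\geq N$. For such $n$ let $\alpha_n(x)$ denote the starting coordinate of the expected occurrence of $v_n$ in $x$ containing position $0$, so that $x\!\upharpoonright\![\alpha_n(x),\alpha_n(x)+\lh(v_n)-1]=v_n$ with $\alpha_n(x)\leq 0\leq \alpha_n(x)+\lh(v_n)-1$, and define $\alpha_n(y)$ likewise. The key computation is a nesting identity: the expected occurrence of $v_n$ containing $0$ is the $\lambda_n(x)$-th of the $r_n$ expected occurrences of $v_n$ inside the expected occurrence of $v_{n+1}$ containing $0$, and the offset of the $i$-th such copy from the start of $v_{n+1}$ is $(i-1)\lh(v_n)+\sum_{j<i}s_n(j)$, a quantity depending only on $i$ and on the common spacer word $s_n$. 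Consequently $\alpha_n(x)-\alpha_{n+1}(x)$ depends only on $\lambda_n(x)$, and similarly for $y$; since $\lambda_n(x)=\lambda_n(y)$ for $n\geq N$, I obtain that $c:=\alpha_n(x)-\alpha_n(y)$ is independent of $n$.

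Next I would check that $\sigma^c(x)$ and $y$ agree on the block $J_n:=[\alpha_n(y),\alpha_n(y)+\lh(v_n)-1]$: both the occurrence in $x$ and the occurrence in $y$ spell $v_n$, and $c$ is precisely the shift carrying the former onto the latter. The blocks $J_n$ are nested increasing (each $v_n$-occurrence lies inside the next $v_{n+1}$-occurrence), so their union $I$ is an interval containing $0$, and it is infinite since $\lh(v_n)\to\infty$. If $I=\Z$, then $\sigma^c(x)=y$ and we are done.

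The hard part is the remaining case, where $I$ is only a half-line, say $I=[M+1,\infty)$ (the right-bounded case is symmetric under reversal). Here position $M+1$ begins an expected occurrence of $v_n$ in $y$ for all large $n$, yet $M$ never shares an expected $v_n$-occurrence with $M+1$: there is a seam between $M$ and $M+1$. This is exactly where the definition $E=\bigcap_k\sigma^k[E_0]$ is indispensable. Since $y\in E$, position $M$ itself lies in an expected occurrence of $v_n$ for all large $n$; because expected occurrences of $v_n$ are pairwise disjoint, the one containing $M$ must end at $M$ and abut, with no intervening spacer, the one beginning at $M+1$. Thus the seam carries gap $0$ at every large level, which pins down $y\!\upharpoonright\!(-\infty,M]$ completely---it is the left-infinite word $\overleftarrow{V}$ obtained by reading the nested occurrences of $v_n$ to the left of the seam. (I note that dropping the intersection over all shifts in the definition of $E$ would permit a nonempty spacer at such a seam, yielding distinct non-orbit-equivalent points with identical labels; membership in $E$ is precisely what excludes these.) Since $\sigma^c(x)\in E$ as well and agrees with $y$ on $[M+1,\infty)$, it has the same seam at $M$ and hence the same tail $\overleftarrow{V}$ on $(-\infty,M]$. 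Therefore $\sigma^c(x)$ and $y$ agree on all of $\Z$, which finishes the argument. The principal obstacle, and the step I would spend the most care on, is this seam analysis: arguing that the defining property of $E$ forbids a positive gap at a seam and thereby rigidifies the point beyond it.
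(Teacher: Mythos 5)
Your proposal is correct, and its core is the same as the paper's: you show that the matching labels force the starting positions of the nested expected occurrences of $v_n$ containing position $0$ in $x$ and in $y$ to differ by a constant $c$ (this is exactly the paper's induction giving $k=l^x_n-l^y_n$ for all $n\geq N$), and then conclude $\sigma^c(x)=y$. Where you go beyond the paper is the endgame. The paper finishes with the single assertion that constancy of the offset ``implies that $\sigma^k(x)=y$,'' which is immediate only when the blocks $[l^y_n,\,l^y_n+\lh(v_n)-1]$ exhaust $\Z$; when $\lambda_n(y)=1$ eventually (or $\lambda_n(y)=r_n$ eventually) their union is only a half-line, and points of this kind can genuinely lie in $E$ (e.g.\ the two-sided point obtained by gluing the reverse limit of the $v_n$ directly onto $V$, whenever it belongs to $X_V$). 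Your seam analysis is exactly what closes this case, and it is sound: since every position of a point of $E$ is covered by an expected occurrence of some $v_n$, and distinct expected occurrences are disjoint intervals, the occurrence covering the position $M$ immediately left of the seam must end precisely at $M$, so the gap is $0$ at every large level and the left tail is pinned to the reverse limit of the $v_n$; applying this to both $y$ and $\sigma^c(x)$ (which lies in $E$ by shift-invariance of $E$) forces agreement on all of $\Z$. So your argument is the paper's argument made complete. It is worth noting that in the paper's application of the lemma this degenerate case never arises, because the lemma is applied there to generic pairs for which $\kappa_n(x)=0$ infinitely often, which forces the nested blocks to exhaust $\Z$; but for the lemma as stated, about arbitrary $x,y\in E$, your extra step is needed.
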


\begin{proof} We may assume without loss of generality that $\lambda_N(x)=\lambda_N(y)<\infty$. For $n\geq N$, let $l^x_n$ be the beginning position of the expected occurrence of $v_n$ in $x$ containing the position 0, and $l^y_n$ be the beginning position of the expected occurrence of $v_n$ in $y$ containing the position 0. Let $k=l^x_N-l^y_N$. Then by an easy induction on $n\geq N$ we have that for all $n\geq N$, $k=l^x_n-l^y_n$. This implies that $\sigma^k(x)=y$.
\end{proof}

\begin{lemma}\label{same} Let $x, y\in E$ and $n\in\N_+$. Suppose that  $\lambda_{n-1}(x)=\lambda_{n-1}(y)<\infty$. Let $[c,d]$ be the interval of overlap between the expected occurrence of $v_n$ in $x$ containing the position 0 and the expected occurrence of $v_n$ in $y$ containing the position 0. That is, letting $l^x_n$ be the beginning position of the expected occurrence of $v_n$ in $x$ containing the position 0 and $l^y_n$ be the beginning position of the expected occurrence of $v_n$ in $y$ containing the position 0, then $[c,d]=[l^x_n, l^x_n+\lh(v_n)]\cap [l^y_n, l^y_n+\lh(v_n)]$. Then $d-c\geq \lh(v_{n-1})$.
\end{lemma}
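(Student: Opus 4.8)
The plan is to reduce the claim to a bound on the displacement $l^y_n - l^x_n$ between the two expected occurrences of $v_n$, and to observe that the hypothesis $\lambda_{n-1}(x) = \lambda_{n-1}(y)$ forces this displacement to coincide exactly with the displacement between the two expected occurrences of $v_{n-1}$ containing the position $0$. The latter is crudely bounded by $\lh(v_{n-1}) - 1$, which turns out to be small compared to $\lh(v_n)$.

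First I would locate the expected occurrence of $v_{n-1}$ containing $0$ inside the expected occurrence of $v_n$ containing $0$. Writing $i = \lambda_{n-1}(x) = \lambda_{n-1}(y) < \infty$, the hypothesis says that in both $x$ and $y$ this occurrence of $v_{n-1}$ is the $i$-th copy of $v_{n-1}$ inside the relevant expected occurrence of $v_n$. Since $x$ and $y$ are built from the same generating sequence, the offset of the $i$-th copy of $v_{n-1}$ inside a single $v_n$, namely
$$ o_i = (i-1)\lh(v_{n-1}) + \sum_{j=1}^{i-1} s_{n-1}(j), $$
is the same number for both words. Hence $l^x_{n-1} = l^x_n + o_i$ and $l^y_{n-1} = l^y_n + o_i$, and subtracting yields the key identity $l^x_n - l^y_n = l^x_{n-1} - l^y_{n-1}$.

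Next I would bound the right-hand side. Because $\lambda_{n-1}(x), \lambda_{n-1}(y) < \infty$, both of these expected occurrences of $v_{n-1}$ contain the position $0$, so each of $l^x_{n-1}$ and $l^y_{n-1}$ lies in the interval $[-(\lh(v_{n-1})-1), 0]$; therefore $|l^x_{n-1} - l^y_{n-1}| \leq \lh(v_{n-1}) - 1$, and by the key identity the same bound holds for $|l^x_n - l^y_n|$. Finally I would compute the overlap directly: assuming without loss of generality $l^x_n \leq l^y_n$, the displacement is strictly less than $\lh(v_n)$, so the intersection is the nonempty interval $[l^y_n, l^x_n + \lh(v_n)]$ and
$$ d - c = \lh(v_n) - (l^y_n - l^x_n) \geq \lh(v_n) - (\lh(v_{n-1}) - 1). $$
Since we may assume $r_{n-1} \geq 3$ by Lemma~\ref{cut}, we have $\lh(v_n) \geq r_{n-1}\lh(v_{n-1}) \geq 3\lh(v_{n-1})$, whence $d - c \geq 2\lh(v_{n-1}) + 1 \geq \lh(v_{n-1})$, as required.

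I expect no serious obstacle here; the entire content of the lemma is concentrated in the first step, where equality of the labels $\lambda_{n-1}$ is used to transport the displacement at level $n-1$ to level $n$ unchanged. Once that identity is in hand, the remaining argument is the elementary interval arithmetic above, and the factor $r_{n-1} \geq 3$ makes the final inequality comfortable (indeed, even $r_{n-1} \geq 2$ would suffice).
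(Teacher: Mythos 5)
Your proof is correct and takes essentially the same route as the paper: the paper likewise observes that the two $i$-th copies of $v_{n-1}$ both contain position $0$, so the two expected occurrences of $v_n$ are displaced by at most $\lh(v_{n-1})$, and concludes $d-c\geq \lh(v_n)-\lh(v_{n-1})\geq \lh(v_{n-1})$ using $r_{n-1}\geq 2$. Your explicit displacement identity $l^x_n-l^y_n=l^x_{n-1}-l^y_{n-1}$ is just a more computational phrasing of the same alignment argument, and your parenthetical remark that $r_{n-1}\geq 2$ already suffices matches the bound the paper actually uses.
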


\begin{proof}
Suppose $\lambda_{n-1}(x)=\lambda_{n-1}(y)=i$. Then the $i$-th occurrence of $v_{n-1}$ in the expected occurrence of $v_n$ in $x$ containing $0$ has a nonempty overlap with the $i$-th occurrence of $v_{n-1}$ in the expected occurrence of $v_n$ in $y$ containing $0$. This implies that for all $1\leq j\leq r_{n-1}$, the $j$-th occurrence of $v_{n-1}$ in the expected occurrence of $v_n$ in $x$ containing $0$ has a nonempty overlap with the $j$-th occurrence of $v_{n-1}$ in the expected occurrence of $v_n$ in $y$ containing $0$. It follows that the length of $[l^x_n, l^x_n+\lh(v_n)]\setminus [c,d]$ cannot be greater than $\lh(v_{n-1})$. Since $r_{n-1}\geq 2$, we have $d-c\geq \lh(v_n)-\lh(v_{n-1})\geq \lh(v_{n-1})$.
\end{proof}

Define another labeling function $\kappa_n: E\to \{-1, 0, +1, \infty\}$ for all $n\in\N$ as follows:
$$ \kappa_n(x)=\left\{ \begin{array}{ll} -1 & \mbox{ if $\lambda_n(x)=1$}, \\
0 & \mbox{ if $2\leq \lambda_n(x)\leq r_n-1$}, \\
+1 & \mbox{ if $\lambda_n(x)=r_n$}, \\
\infty & \mbox { if $\lambda_n(x)=\infty$.}\end{array}\right. $$

\begin{lemma}\label{density} For $\mu$-a.e.~$x\in X$, the set $\{n\in\N: \kappa_n(x)=0\}$ has density at least 1/3. In particular, for $\mu$-a.e.~$x\in X$, there are infinitely many $n\in \N$ such that $\kappa_n(x)=0$.
\end{lemma}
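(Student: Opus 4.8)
The plan is to exploit the tower (cutting-and-stacking) description of $\mu$ to show that, under $\mu$, the labels $\lambda_n$ are essentially independent with $\lambda_n$ uniformly distributed over $\{1,\dots,r_n\}$, and then to invoke the strong law of large numbers. The point is that $\kappa_n(x)=0$ holds exactly when $2\le\lambda_n(x)\le r_n-1$, so if $\lambda_n$ is uniform on $\{1,\dots,r_n\}$ then $\mu(\kappa_n=0)=\frac{r_n-2}{r_n}\ge\frac13$, where the inequality uses $r_n\ge 3$ from Lemma~\ref{cut}. This is exactly where the constant $1/3$ originates, and it explains why the reduction in Lemma~\ref{cut} was needed.

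To make this precise I would first record the joint law of the labels. For $m\le M$ and any values $i_n\in\{1,\dots,r_n\}$, the event $\{\lambda_m=i_m,\dots,\lambda_M=i_M\}$ pins the position $0$ into one specific expected occurrence of $v_m$ inside the expected occurrence of $v_{M+1}$ containing $0$; on this event position $0$ ranges over the $\lh(v_m)$ levels of that $v_m$-block, each of which is a single level of the stage-$(M+1)$ tower. Since all levels of a given tower have equal $\mu$-measure, I would conclude that
\[
\mu(\lambda_m=i_m,\dots,\lambda_M=i_M)=\lh(v_m)\cdot\beta_{M+1},
\]
where $\beta_{M+1}$ is the measure of a single level of the stage-$(M+1)$ tower, a quantity independent of the chosen $i_m,\dots,i_M$. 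Writing $F_m=\{x\in E:\lambda_m(x)<\infty\}$ and noting that the events $\{\lambda_n<\infty\}$ increase in $n$ (so $F_m=\bigcap_{n\ge m}\{\lambda_n<\infty\}$), summing over the $r_m\cdots r_M$ choices shows that, conditioned on $F_m$, the variables $\lambda_m,\lambda_{m+1},\dots$ are independent with $\lambda_n$ uniform on $\{1,\dots,r_n\}$. Consequently the indicators $Z_n=\chi[\kappa_n=0]$, for $n\ge m$, are, under the conditional measure $\mu(\cdot\mid F_m)$, independent Bernoulli variables with $\mathbb{E}(Z_n\mid F_m)=\frac{r_n-2}{r_n}\ge\frac13$.

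Finally I would apply Kolmogorov's strong law of large numbers: the $Z_n$ are independent and uniformly bounded, so $\frac1N\sum_{n=m}^{m+N-1}\bigl(Z_n-\mathbb{E}(Z_n\mid F_m)\bigr)\to 0$ for $\mu(\cdot\mid F_m)$-a.e.\ $x$, whence the lower density of $\{n:\kappa_n(x)=0\}$ is at least $\liminf_n\frac{r_n-2}{r_n}\ge\frac13$ (deleting the finitely many indices below $m$ does not affect densities). Since $E=\bigcup_m F_m$ and $\mu(E)=1$, taking the union over $m$ gives the conclusion for $\mu$-a.e.\ $x$, and positive lower density yields infinitely many such $n$. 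I expect the main obstacle to be the first step, namely justifying the independence-and-uniformity of the labels: this requires translating the combinatorial labels $\lambda_n$ into levels of the stage-$(M+1)$ Rokhlin tower and verifying that the measure of the event above is genuinely independent of $i_m,\dots,i_M$. Once this symmetry is established, the remaining probabilistic steps are routine.
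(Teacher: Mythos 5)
Your proof is correct and follows essentially the same route as the paper: decompose $E$ as the increasing union of the sets where the label is finite, observe that on each such set the labels are independent with $\kappa_n=0$ having probability $\frac{r_n-2}{r_n}\geq \frac{1}{3}$ (using $r_n\geq 3$ from Lemma~\ref{cut}), and conclude by the law of large numbers. The only difference is that you explicitly justify the independence and uniformity of the $\lambda_n$ via equal measures of tower levels, a point the paper's proof asserts without detail.
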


\begin{proof}For each $N\in\N_+$ let $E_N=\{ x\in E: \kappa_N(x)<\infty\}$. Then $E_N\subseteq E_{N+1}$ for all $N\in\N_+$ and $E=\bigcup_{N\in\N_+} E_N$. For each $n\in\N_+$ and $\iota\in\{-1, 0, +1\}$, let $E_{n,\iota}=\{x\in E_n: \kappa_n(x)=\iota\}$. Then $\mu(E_{n,0})\geq \mu(E_n)/3\geq \mu(E_N)/3$ if $n\geq N$. Also, on each $E_N$ the functions $\kappa_N$, $\kappa_{N+1}$, $\dots$, are independent. By the law of large numbers, for each $N\in\N_+$ and $\mu$-a.e.~$x\in E_N$, $\{ n\geq N: \kappa_n(x)=0\}$ has density at least $1/3$. It follows that for $\mu$-a.e.~$x\in X$, $\{n\in\N: \kappa_n(x)=0\}$ has density at least $1/3$.
\end{proof}

\begin{lemma} \label{center}Let $x, y\in E$ and $n\in\N_+$. Suppose that $\kappa_{n-1}(x)=0$ and $\kappa_{n-1}(y)<\infty$. Let $[c, d]$ be the interval of overlap between the expected occurrence of $v_n$ in $x$ containing the position 0 and the expected occurrence of $v_n$ in $y$ containing the position 0. Then $d-c\geq \lh(v_{n-1})$.
\end{lemma}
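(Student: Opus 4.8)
The plan is to push everything into the two starting coordinates of the $v_n$-blocks and reduce the claim to a single inequality that I then extract from the hypothesis $\kappa_{n-1}(x)=0$. Write $l^x_n$ and $l^y_n$ for the beginning positions of the expected occurrences of $v_n$ in $x$ and in $y$ containing position $0$, and $l^x_{n-1}$ for the beginning position of the expected occurrence of $v_{n-1}$ in $x$ containing $0$, so that, exactly as in Lemma~\ref{same}, $[c,d]=[l^x_n,l^x_n+\lh(v_n)]\cap[l^y_n,l^y_n+\lh(v_n)]$. Since position $0$ lies in both $v_n$-blocks, the two intervals overlap, and having equal length $\lh(v_n)$ they satisfy $d-c=\lh(v_n)-|l^x_n-l^y_n|$. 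Hence it will suffice to prove the bound $|l^x_n-l^y_n|\le \lh(v_n)-\lh(v_{n-1})$.

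The crux is to use $\kappa_{n-1}(x)=0$, i.e.\ $2\le\lambda_{n-1}(x)\le r_{n-1}-1$, to show that position $0$ sits at least a full $v_{n-1}$-length from each end of $x$'s block; this is the only place the hypothesis enters. Because $\lambda_{n-1}(x)\ge 2$, the copy of $v_{n-1}$ in $x$ containing $0$ is preceded inside the same $v_n$-block by at least one full copy of $v_{n-1}$, which will give $l^x_n\le l^x_{n-1}-\lh(v_{n-1})\le-\lh(v_{n-1})$ using $l^x_{n-1}\le 0$. Because $\lambda_{n-1}(x)\le r_{n-1}-1$, it is followed inside the block by at least one full copy, which will give $l^x_n+\lh(v_n)\ge l^x_{n-1}+2\lh(v_{n-1})\ge\lh(v_{n-1})+1$ using $l^x_{n-1}\ge 1-\lh(v_{n-1})$ (the spacer contributions $s_{n-1}(j)$ only help, being nonnegative). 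From $y$ I will use only the trivial fact that $0$ lies in $y$'s $v_n$-block, giving $1-\lh(v_n)\le l^y_n\le 0$.

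Combining these in each direction, I expect to obtain $l^x_n-l^y_n\le-\lh(v_{n-1})-(1-\lh(v_n))=\lh(v_n)-\lh(v_{n-1})-1$ and $l^y_n-l^x_n\le 0-(\lh(v_{n-1})+1-\lh(v_n))=\lh(v_n)-\lh(v_{n-1})-1$, so $|l^x_n-l^y_n|\le\lh(v_n)-\lh(v_{n-1})-1$ and therefore $d-c\ge\lh(v_{n-1})+1$, slightly more than claimed.

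The main obstacle is conceptual rather than computational: unlike in Lemma~\ref{same}, the label $\lambda_{n-1}(y)$ is entirely unconstrained, so position $0$ may lie in the first or the last copy of $v_{n-1}$ inside $y$'s block and one cannot align the $v_{n-1}$-copies of $x$ and $y$. The right device is thus to lean wholly on $x$: the interiority encoded by $\kappa_{n-1}(x)=0$ buries $0$ a depth $\lh(v_{n-1})$ on both sides of $x$'s block, and a symmetric two-sided estimate against the crude location of $y$'s block controls $|l^x_n-l^y_n|$. The only care needed is the spacer bookkeeping above and adherence to the closed-interval convention of Lemma~\ref{same}, neither of which affects the final constant.
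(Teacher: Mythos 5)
Your proof is correct and is essentially the paper's own argument: the paper's ``moment of reflection''---that since $2\le\lambda_{n-1}(x)\le r_{n-1}-1$, either the first or the last expected occurrence of $v_{n-1}$ in $x$'s $v_n$-block must lie inside the overlap with $y$'s block---is exactly what your coordinate inequalities $l^x_n\le -\lh(v_{n-1})$ and $l^x_n+\lh(v_n)\ge \lh(v_{n-1})+1$, combined with $1-\lh(v_n)\le l^y_n\le 0$, encode. The difference is purely presentational (your arithmetic even gives the harmless, slightly stronger bound $d-c\ge\lh(v_{n-1})+1$), so there is nothing to add.
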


\begin{proof} Suppose $\lambda_{n-1}(x)=i$. Then $1<i<r_n$. A moment of reflection gives that, in the expected occurrence of $v_n$ in $x$ containing the position 0, either the first expected occurrence of $v_{n-1}$ overlaps with the expected occurrence of $v_n$ in $y$ containing the position 0, or the last expected occurrence of $v_{n-1}$ overlaps with the expected occurrence of $v_n$ in $y$ containing the position 0. This shows that $d-c\geq \lh(v_{n-1})$.
\end{proof}

We now proceed to set up the proof for minimal self-joinings of order 2. Let $\overline{\mu}$ be an ergodic joining on $X\times X$ with marginals $\mu$. Suppose $\overline{\mu}$ is not an off-diagonal measure. We need to show that $\overline{\mu}=\mu\times\mu$. Again by Lemma 6.14 of \cite{RudolphBook} it suffices to find some nonzero $k\in\Z$ such that $\overline{\mu}$ is $(\sigma^k\times \id)$-invariant, since by our condition (d), $(X, \mu, \sigma^k)$ is ergodic. We let $(x, y)\in X\times X$ be a $\overline{\mu}$-generic pair in the sense that the following hold:
\begin{itemize}
\item $(x, y)$ satisfies the ergodic theorem for $\overline{\mu}$;
\item $x, y\in E$ are not in the same $\sigma$-orbit; and
\item the set $\{n\in\N: \kappa_n(x)=0\}$ has positive density.
\end{itemize}
Each of these properties are satisfied by $\overline{\mu}$-a.e.~pairs in $X\times X$.

As in the proof of Theorem~\ref{maindisjoint} it suffices to find $a_n, b_n, c_n, d_n, e_n, k_n\in\Z$ for all $n\in\N$, a positive integer $K\geq 1$ and a real number $\alpha>0$ so that for all $n\in\N$,
\begin{enumerate}
\item[\rm (nulla)] $0<|k_n|\leq K$;
\item[\rm (i)] $a_n\leq 0\leq b_n$ and $\lim_n (b_n-a_n)=+\infty$;
\item[\rm (ii)] $a_n\leq c_n\leq d_n\leq b_n$ and $a_n\leq c_n+e_n\leq d_n+e_n\leq b_n$;
\item[\rm (iii)] $d_n-c_n\geq \alpha(b_n-a_n)$; 
\item[\rm (iv)] for all $c_n\leq i\leq d_n$, $x(i)=x(i+k_n+e_n)$ and $y(i)=y(i+e_n)$.
\end{enumerate}
Applications of Lemma~\ref{tech} and its variations will give that $\overline{\mu}$ is $(\sigma^k\times \id)$-invariant, and so $\overline{\mu}=\mu\times\mu$.

Let $K=S$ where $S$ is the bound in condition (b). Let 
$$ \alpha=\displaystyle\frac{1}{2(R+1)^2}$$
where $R$ is the bound in condition (a). Fix an $n_0\in\N$ such that  $\lh(v_{n_0})>RS$. 
Let 
$$ D=\{ n\in\N: n> n_0, \lambda_n(x), \lambda_n(y)<\infty \mbox{ and } \lambda_n(x)\neq \lambda_n(y)\}. $$
Since $x$ and $y$ are not in the same $\sigma$-orbit, $D$ is infinite by Lemma~\ref{orbit}. 

\begin{lemma}\label{cases} There is an infinite $D'\subseteq D$ such that for all $n\in D'$, either  $\lambda_{n-1}(x)=\lambda_{n-1}(y)<\infty$, or both $\kappa_{n-1} (x)=0$ and $\kappa_{n-1}(y)<\infty$. 
\end{lemma}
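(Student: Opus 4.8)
The plan is to reduce the statement to a clean combinatorial dichotomy about the set $D$, and then, in one of the two resulting cases, exploit that $\{m\in\N:\kappa_m(x)=0\}$ is infinite. First I would dispose of the ``$<\infty$'' clauses that appear in both alternatives. Since $x,y\in E$, we have $\lambda_m(x),\lambda_m(y)<\infty$ for all sufficiently large $m$; fix an $N$ beyond which this holds and beyond which $m>n_0$. Replacing $D$ by its tail $\{n\in D: n-1\geq N\}$, which is still infinite, every $n$ under consideration satisfies $\lambda_{n-1}(x),\lambda_{n-1}(y)<\infty$ automatically. Thus the real content is to produce infinitely many $n\in D$ for which \emph{either} $\lambda_{n-1}(x)=\lambda_{n-1}(y)$ \emph{or} $\kappa_{n-1}(x)=0$.

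The central observation is a link between level $n-1$ and membership in $D$: for such tail $n$, if $n-1\notin D$ then, because $\lambda_{n-1}(x),\lambda_{n-1}(y)$ are finite and $n-1>n_0$, the only way to fail $n-1\in D$ is to have $\lambda_{n-1}(x)=\lambda_{n-1}(y)$, which is exactly the first alternative. So I would split on the ``run structure'' of $D$. In Case A, infinitely many $n\in D$ satisfy $n-1\notin D$; each such $n$ satisfies the first alternative, and they form the desired $D'$. In Case B, only finitely many $n\in D$ satisfy $n-1\notin D$. The points $n\in D$ with $n-1\notin D$ are precisely the left endpoints of the maximal runs of consecutive integers in $D$; finitely many of them means $D$ is a union of finitely many such runs, and since $D$ is infinite one run must be infinite. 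Hence $D\supseteq[M,\infty)$ for some $M$, i.e.\ $D$ is cofinite.

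In Case B I would then intersect with the density set. Since $D$ is cofinite, so is $D-1=\{n-1:n\in D\}$, and therefore $D-1$ meets the infinite set $\{m:\kappa_m(x)=0\}$ in infinitely many points. For each such $m$, set $n=m+1\in D$; then $\kappa_{n-1}(x)=0$ and, as $n$ is in the tail, $\kappa_{n-1}(y)<\infty$, which is exactly the second alternative. These $n$ form the desired $D'$.

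The step I expect to be the main obstacle is recognizing that one cannot simply intersect $D$ with the set $\{m:\kappa_m(x)=0\}$: the set $D$ may be sparse, and two infinite (indeed two positive-density) sets can be disjoint, so infiniteness of each is not enough to force a common element. The fix is the dichotomy above, which trades sparseness for the first alternative: whenever $D$ is not cofinite it has infinitely many run-starts, and run-starts automatically land us in Case A, whereas cofiniteness is precisely the extra room needed to force an infinite intersection with $\{m:\kappa_m(x)=0\}$ in Case B. Everything else — the tail passage absorbing the thresholds $n_0$ and $N$, and the identification of run-starts with points where $\lambda_{n-1}(x)=\lambda_{n-1}(y)$ holds — is routine bookkeeping.
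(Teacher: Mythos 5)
Your proposal is correct and takes essentially the same route as the paper: the paper splits on whether $\N\setminus D$ is infinite or finite, which (given that $D$ is infinite) is exactly your run-start dichotomy, harvesting run-starts of $D$ for the first alternative and using cofiniteness of $D$ together with the positive-density (hence infinite) set $\{m:\kappa_m(x)=0\}$ for the second. Your write-up merely makes explicit the tail-passage and run-structure bookkeeping that the paper's terse proof leaves implicit.
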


\begin{proof} If $\N\setminus D$ is infinite, then 
$$ D'=\{n\in\N:n> n_0, \lambda_{n-1}(x)=\lambda_{n-1}(y)<\infty \mbox{ and } \lambda_n(x)\neq \lambda_n(y)\} $$
is infinite and $D'\subseteq D$. If $\N\setminus D$ is finite, then 
$$ D'=\{n\in\N: n> n_0, \kappa_{n-1}(x)=0,\ \kappa_{n-1}(y)<\infty \mbox{ and } \lambda_n(x)\neq\lambda_n(y)\} $$
has positive density and therefore is infinite.
\end{proof}

Fix an infinite $D'\subseteq D$ as in the above lemma. It suffices to define $a_n, b_n, c_n, d_n, e_n, k_n\in\Z$ for all $n\in D'$ as required. For the rest of the proof fix $n\in D'$. 

Let $[c, d]$ be the interval of overlap between the expected occurrence of $v_n$ in $x$ containing the position 0 and the expected occurrence of $v_n$ in $y$ containing the position $0$.  By Lemmas~\ref{cases}, \ref{same} and \ref{center}, we have that 
$$ d-c\geq \lh(v_{n-1})\geq \frac{\lh(v_{n-1})}{R\lh(v_{n-1})+RS}\lh(v_n)\geq  \frac{1}{R+1}\lh(v_n). $$

Define 
$$ a_n=-\lh(v_{n+1}) \mbox{ and } b_n=\lh(v_{n+1}). $$
Let $l=l^y_{n+1}$. Then the expected occurrence of $v_{n+1}$ in $y$ containing the position 0 starts at the position $l$. Suppose this occurrence finishes at position $m$. Then $a_n\leq l\leq 0\leq m\leq b_n$. 

Let $i_y=\lambda_n(y)$. Then in $y$, the position 0 is contained in the $i_y$-th occurrence of $v_n$ in the expected occurrence of $v_{n+1}$ from position $l$ to position $m$. Correspondingly in $x$, we examine the $r_n$ many consecutive expected occurrences of $v_n$ so that the position 0 is contained in the $i_y$-th occurrence of $v_n$. Suppose the following word is observed:
$$ v_n1^{p(1)}v_n1^{p(2)}\dots v_n1^{p(r_n-1)}v_n. $$
Since $\lambda_n(x)\neq\lambda_n(y)$, this observed word is not contained in a single expected occurrence of $v_{n+1}$. Rather, it is contained in a subword of $x$ of the form $v_{n+1}1^qv_{n+1}$, where each demonstrated occurrence of $v_{n+1}$ is expected. By comparison, we obtain that $p$ is a subword of $s_n^\smallfrown (q)^\smallfrown s_n$, and that $p$ does not coincide with any of the two demonstrated occurrences of $s_n$. By our condition (c), this implies that $p\neq s_n$. 

Let $i_0$ be such that $1\leq i_0\leq r_n-1$ and $p(i_0)\neq s_n(i_0)$ and so that $|i_0-i_y|$ is the least. For definiteness first assume that $i_0\geq i_y$. In this case let 
$$ h=(i_0-i_y)\lh(v_n)+\sum_{i=i_y}^{i_0-1}s_n(i). $$
Then in $x$ there is an occurrence of the word $v_n1^{p(i_0)}v_n$ beginning at the position $l^x_n+h$. Similarly, in $y$ there is an occurrence of the word $v_n1^{s_n(i_0)}v_n$ beginning at the position $l^y_n+h$. Define $[c_n, d_n]$ to be the interval of overlap between the these first demonstrated occurrences of $v_n$ in $x$ and in $y$. Then we have in fact $c_n=c+h$ and $d_n=d+h$. So
$$ d_n-c_n=d-c\geq \frac{1}{R+1}\lh(v_n). $$
Define 
$$e_n=\lh(v_n)+s_n(i_0)$$
and
$$ k_n=p(i_0)-s_n(i_0). $$
We have that $x\!\upharpoonright\![c_n,d_n]=x\!\upharpoonright [c_n+k_n+e_n, d_n+k_n+e_n]$ and $y\!\upharpoonright\![c_n,d_n]=y\!\upharpoonright\![c_n+e_n, d_n+e_n]$. Since $[c_n, d_n], [c_n+e_n, d_n+e_n]\subseteq [l,m]\subseteq [a_n, b_n]$ and 
$$ \displaystyle\frac{d_n-c_n}{b_n-a_n}\geq \frac{\lh(v_n)}{(R+1)\cdot 2\lh(v_{n+1})}\geq \frac{1}{2(R+1)^2}=\alpha, $$
our proof is complete in this case.

The alternative is the case $i_0<i_y$. In this case we let instead
$$ h=(i_0-i_y+1)\lh(v_n)-\sum_{i=i_0+1}^{i_y-1}s_n(i)\leq 0. $$
Then in $x$ there is an occurrence of the word $v_n1^{p(i_0)}v_n$ where the beginning of the second demonstrated occurrence is at the position $l^x_n+h$. Similarly, in $y$ there is an occurrence of the word $v_n1^{s_n(i_0)}v_n$ where the beginning of the second demonstrated occurrence is at the position $l^y_n+h$. We similarly let $[c_n, d_n]$ be the interval of overlap of these second occurrences of $v_n$ in $x$ and in $y$. Then $c_n=c+h$ and $d_n=d+h$. Define
$$ e_n=-\lh(v_n)-s_n(i_0) $$
and
$$ k_n=-p(i_0)+s_n(i_0). $$
We still have that $d_n-c_n\geq \lh(v_n)/(R+1)$, and the proof is similarly completed.

We have thus shown that $(X, \mu, \sigma)$ has minimal self-joinings of order 2, and therefore minimal self-joinings of all orders.

\subsection{Ryzhikov's theorem}

As a corollary to Theorem~\ref{mainMSJ}, we obtain the following theorem of Ryzhikov \cite{Ryzhikov} on minimal self-joinings for non-rigid, totally ergodic, bounded rank-one transformations.

\begin{corollary}[Ryzhikov \cite{Ryzhikov}]\label{Ryzh} Let $T$ be a bounded rank-one transformation. Then $T$ has minimal self-joinings of all orders if and only if $T$ is non-rigid and totally ergodic.
\end{corollary}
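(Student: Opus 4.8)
The plan is to prove the two implications separately. The forward implication is soft and rests only on the general theory of minimal self-joinings together with the terminology fixed in Theorem~\ref{TC}. Suppose $T$ has minimal self-joinings of all orders, so in particular of order $2$. Minimal self-joinings of order $2$ forces every graph joining to be an off-diagonal, whence the centralizer of $T$ is exactly $\{\sigma^n : n\in\Z\}$; this is what Theorem~\ref{TC} calls having trivial centralizer, i.e.\ $T$ is non-rigid. Minimal self-joinings of order $2$ also forces weak mixing, so $(X,\mu,\sigma^k)$ is ergodic for every $k\neq 0$, i.e.\ $T$ is totally ergodic. No use of boundedness is needed in this direction.

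For the reverse implication, assume $T$ is bounded, non-rigid and totally ergodic. By Theorem~\ref{TC}, a bounded non-rigid rank-one transformation is canonically bounded, so I may fix bounded \emph{canonical} cutting and spacer parameters $(r_n)$, $(s_n)$ generating $T$ as $(X,\mu,\sigma)$. The goal is to verify the hypotheses of Theorem~\ref{mainMSJ}, which produces minimal self-joinings of order $2$, and then to invoke Theorem~\ref{allorders} to upgrade to all orders. Hypotheses (a) and (b) are immediate from the boundedness of the canonical parameters, and hypothesis (d) is exactly total ergodicity; if one prefers the weaker (d$'$), it is supplied by the $(\mathrm{E}_d)$ characterization of $T^d$-ergodicity recorded before Corollary~\ref{CBmain}.

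The heart of the proof, and the step I expect to be the main obstacle, is hypothesis (c): that $s_n$ occurs only as its two demonstrated copies in $s_n^\smallfrown(c)^\smallfrown s_n$. I would derive this from the minimality built into the canonical generating sequence. A third, hidden occurrence of $s_n$ in $s_n^\smallfrown(c)^\smallfrown s_n$ is a nontrivial self-overlap of the spacer pattern; by a combinatorics-on-words argument in the spirit of Lemma~\ref{perp}, such an overlap forces $s_n$ to be periodic with $c$ equal to its period. Translating this back through the recursion $v_{n+1}=v_n1^{s_n(1)}v_n\cdots v_n1^{s_n(r_n-1)}v_n$, a coincidence of this kind exhibits a $u\prec v_n$ that is simply built all the way up to some $w$ with $v_n\prec w\prec V$, so that $u\prec_s w$, contradicting the defining property of the canonical generating sequence. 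This is precisely the ``this word cannot be constant, else an intermediate level would be skipped'' device already used at the end of the proof of Corollary~\ref{CBmain}(1).

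Two points require care and are where I expect the real work to lie. First, condition (c) is stated for all $c$, whereas a hidden occurrence only causes trouble for very special values of $c$; the proof of Theorem~\ref{mainMSJ} in fact only invokes (c) for the values $c=q$ arising as gaps between consecutive expected occurrences of $v_{n+1}$ in $V$, so it suffices to rule out hidden occurrences for exactly those $c$, and I expect canonicity to forbid precisely such a gap from appearing. Second, if isolated simply-built steps $v_n\prec_s v_{n+1}$ survive in the canonical sequence, I would pass to a subsequence of $(v_n)$, as in Lemma~\ref{cut}, to clear them before appealing to Theorem~\ref{mainMSJ}. Once (c) is established, Theorems~\ref{mainMSJ} and~\ref{allorders} give minimal self-joinings of all orders, completing the equivalence.
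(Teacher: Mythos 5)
Your overall skeleton matches the paper's: the necessity direction is the same soft argument, and for sufficiency the paper likewise combines Theorem~\ref{TC} with Theorem~\ref{mainMSJ} (and Theorem~\ref{allorders}), with hypothesis (c) as the crux. But your treatment of that crux has a genuine gap. First, hypothesis (c) is simply \emph{false} for the canonical parameters themselves: a constant spacer $s_n=(a,\dots,a)$ always has more than two occurrences in ${s_n}^\smallfrown(a)^\smallfrown s_n$, and canonical generating sequences can perfectly well contain constant spacer levels --- canonicity forbids a simply-built relation $u\prec_s w$ that \emph{straddles} a canonical word ($u\prec v\prec w$), not a single simply-built step $v_n\prec_s v_{n+1}$ (for instance $0\prec 01010$ are consecutive canonical words, with spacer $(1,1)$, for suitable $V$). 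This is exactly why the paper never attempts to verify (c) for $(s_n)$: its entire proof consists of regrouping the canonical levels in steps of $2$ or $3$ (step $3$ precisely when $s_{n_k+1}$ is constant), so that each new spacer has the block form ${s_{n_k}}^\smallfrown(u(1))^\smallfrown s_{n_k}\cdots(u(m))^\smallfrown s_{n_k}$ with $u$ non-constant --- non-constancy being what canonicity actually delivers, since a constant $u$ would bypass the intermediate canonical word --- and then running a Lemma~\ref{perp}-style argument on these grouped spacers. Your fallback, ``pass to a subsequence as in Lemma~\ref{cut},'' cannot substitute for this: the proof of Lemma~\ref{cut} \emph{assumes} condition (c) for the original parameters (it uses it twice, once for each alignment case) and only shows (c) is preserved under grouping; it cannot create (c). So what you present as a patch for an edge case is in fact the whole content of the paper's proof, and your version of it is circular.

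Second, your proposed contradiction with canonicity is misdirected even for non-constant $s_n$. A hidden occurrence of $s_n$ in ${s_n}^\smallfrown(c)^\smallfrown s_n$ does force a periodic structure, namely $s_n=q^\smallfrown(a)^\smallfrown q^\smallfrown\cdots{}^\smallfrown(a)^\smallfrown q$ with $c=a$; equivalently, there is a word $W$ with $v_n\prec W\prec v_{n+1}$ and $W\prec_s v_{n+1}$ (example: $s_n=(1,2,1)$, $c=2$, $W=v_n1v_n$, $v_{n+1}=W1^2W$). But this simply-built pair sits \emph{above} $v_n$: it produces no $u\prec v_n\prec w\prec V$ with $u\prec_s w$, so it contradicts neither the defining property at $v_n$ nor at $v_{n+1}$. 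To get a contradiction along your lines you would have to show that $W$ itself admits no bypass, hence belongs to the canonical generating sequence strictly between the consecutive terms $v_n$ and $v_{n+1}$; that is an additional, nontrivial argument (ruling out every $u'\prec W\prec w'$ with $u'\prec_s w'$), and you do not supply it. Finally, your observation that the proof of Theorem~\ref{mainMSJ} only invokes (c) for values of $c$ realized as gaps between expected occurrences of $v_{n+1}$ is a correct reading, but it repairs neither problem: for a constant spacer $s_n=(a,\dots,a)$ the offending value $c=a$ can certainly occur among higher spacer entries.
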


It is easy to verify that having minimal self-joinings implies mild mixing (having no rigid factors), which implies non-rigidity. Having minimal self-joinings also implies weak mixing, which implies total ergodicity. Thus the two conditions are necessary.

For the sufficiency, let $T$ be a bounded rank-one transformation with cutting and spacer parameters $(r_n:n\in\N)$ and $(s_n: n\in\N)$. Assume that $T$ is non-rigid and totally ergodic. By Theorem~\ref{TC}, $T$ is canonically bounded. Thus we may assume without loss of generality that $(r_n:n\in\N)$ and $(s_n:n\in\N)$ are canonical cutting and spacer parameters, which are also bounded. Let $(v_n: n\in\N)$ be the canonical generating sequence given by $(r_n:n\in\N)$ and $(s_n:n\in\N)$. We inductively define an infinite sequence $(n_k:k\in\N)$ of natural numbers as follows. Define $n_0=0$. In general, assume $n_k$, $k\geq 0$, has been defined. Define $n_{k+1}=n_k+2$ if $s_{n_k+1}$ is not constant, and define $n_{k+1}=n_k+3$ otherwise. Let $v_k'=v_{n_k}$ for all $k\in\N$. Then $(v'_n:n\in\N)$ is a subsequence of $(v_n:n\in\N)$, which still generates $T$. Let $(r'_n:n\in\N)$ and $(s'_n:n\in\N)$ be the cutting and spacer parameters corresponding to $(v'_n:n\in\N)$. Since $n_k<n_{k+1}\leq n_k+3$ for all $k\in\N$, these newly defined cutting and spacer parameters are still bounded.

To prove the corollary, we will apply Theorem~\ref{mainMSJ} to $(r'_n:n\in\N)$ and $(s'_n:n\in\N)$. The only condition to verify is (c), that is, for all $n\in\N$ and $c\in\N$, there are only two occurrences of $s'_n$ in ${s'_n}^\smallfrown(c)^\smallfrown s'_n$. Note that for every $k>0$, $s'_k$ is of the form
$$ {s_{n_k}}^\smallfrown (u(1))^\smallfrown {s_{n_k}}^\smallfrown\cdots {}^\smallfrown (u(m))^\smallfrown{s_{n_k}} $$
where $u$ is either $s_{n_k+1}$ or 
$$ {s_{n_k+1}}^\smallfrown (s_{n_k+2}(1))^\smallfrown {s_{n_k+1}}^\smallfrown\cdots {}^\smallfrown(s_{n_k+2}(r_{n_k+2}-1))^\smallfrown {s_{n_k+1}}. $$
As in the proof of Corollary \ref{CBmain}, $u$ is not constant in either cases: in the former case $s_{n_k+1}$ is assumed not to be constant, and in the latter case $u$ corresponds to the way $v_{n_k+2}$ is built from $v_{n_k}$, and therefore is not constant since $v_{n_k+1}$ is assumed to be on the canonical generating sequence. Now if there is $c\in\N$ so that $s'_n$ occurs in ${s'_n}^\smallfrown (c)^\smallfrown {s'_n}$ not as demonstrated, then by a similar argument as the proof of Lemma~\ref{perp}, it would follow that $u$ is constant, a contradiction.

This completes the proof of Corollary~\ref{Ryzh}.

\section{Concluding remarks}

Some results of this paper are applicable in a broader context than stated. We have noted that Theorems \ref{maindisjoint}, \ref{mainMSJ} and Corollary \ref{CBmain} can be strengthened with ``partial total ergodicity'' assumptions replacing the total ergodicity assumptions, which we denoted by (d') and (2') respectively. Here we note that Theorems \ref{mainiso}, \ref{maindisjoint} and Corollary \ref{CBmain} can be further strengthened with an ``eventual commensurability'' assumption replacing the commensurability assumption. For instance, Theorem~\ref{mainiso} can be strengthened as follows.

\begin{theorem} \label{mainisogen}
Let $(r_n: n \in \N)$ and $(s_n: n \in \N)$ be cutting and spacer parameters giving rise to symbolic rank-one system $(X, \mu, \sigma)$. Let $(v_n:n\in\N)$ be the generating sequence given by $(r_n:n\in\N)$ and $(s_n:n\in\N)$. 

Let $(q_n: n \in \N)$ and $(t_n: n \in \N)$ be cutting and spacer parameters giving rise to symbolic rank-one system $(Y, \nu, \sigma)$. Let $(w_n:n\in\N)$ be the generating sequence given by $(q_n:n\in\N)$ and $(t_n:n\in\N)$. 

Suppose the following hold.  
\begin{enumerate}
\item[\rm (a)]The two sets of parameters are ``eventually commensurate'', i.e., there are $N, M\in\N$ such that $\lh(v_N)=\lh(w_M)$ and for all $n\in\N$, $r_{N+n}=q_{M+n}$ and
$$\sum_{i=1}^{r_{N+n}-1} s_{N+n}(i) = \sum_{i=1}^{q_{M+n}-1} t_{M+n}(i).$$ 
\item[\rm (b)]  There is an $S \in \N$ such that for all $n$ and all $1\leq i \leq r_n-1$, $$s_n(i) \leq S \textnormal{ and } t_n(i) \leq S.$$
\item[\rm (c)]  There is an $R \in \N$ such that for infinitely many $n$, $$r_n \leq R \textnormal{ and } s_n \perp t_n.$$
\end{enumerate}   
Then $(X, \mu, \sigma)$ and $(Y, \nu, \sigma)$ are not isomorphic. 
\end{theorem}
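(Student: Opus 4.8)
The plan is to reduce Theorem~\ref{mainisogen} to Theorem~\ref{mainiso} by passing to suitable tails of the two generating sequences. The obstruction to applying Theorem~\ref{mainiso} directly is that eventual commensurability only guarantees matching of the cutting parameters and of the total spacer counts from some stage $N$ (resp.\ $M$) onward, with the additional anchoring condition $\lh(v_N)=\lh(w_M)$; the indices are offset, and the early blocks may be wildly different. The key observation is that the isomorphism type of a symbolic rank-one system is unchanged if we discard finitely many initial terms of the generating sequence, so we are free to re-index.

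First I would set $V=\lim_n v_n$ and $W=\lim_n w_n$ and define new generating sequences $v'_n=v_{N+n}$ and $w'_n=w_{M+n}$ for all $n\in\N$. Since truncating a generating sequence to a tail produces the same infinite rank-one word (the limit is unchanged), $(v'_n:n\in\N)$ still generates $(X,\mu,\sigma)$ and $(w'_n:n\in\N)$ still generates $(Y,\nu,\sigma)$. Let $(r'_n:n\in\N),(s'_n:n\in\N)$ and $(q'_n:n\in\N),(t'_n:n\in\N)$ be the cutting and spacer parameters corresponding to these tails, so that $r'_n=r_{N+n}$, $s'_n=s_{N+n}$, $q'_n=q_{M+n}$, and $t'_n=t_{M+n}$.

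Next I would verify that the re-indexed parameters satisfy the three hypotheses of Theorem~\ref{mainiso}. For (a), the eventual commensurability hypothesis gives $r'_n=r_{N+n}=q_{M+n}=q'_n$ and the matching total spacer counts $\sum_{i=1}^{r'_n-1}s'_n(i)=\sum_{i=1}^{q'_n-1}t'_n(i)$ for every $n$, which is exactly commensurability of the primed parameters; the anchoring identity $\lh(v_N)=\lh(w_M)$ guarantees $\lh(v'_0)=\lh(w'_0)$, so the common length recursion $h_{n+1}=r_nh_n+\sum_i s_n(i)$ propagates $\lh(v'_n)=\lh(w'_n)$ for all $n$, which is what commensurability needs. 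For (b), the bound $S$ on all $s_n(i),t_n(i)$ applies in particular to the tails, so the primed spacer parameters are bounded by the same $S$. For (c), the set of $n$ with $r_n\le R$ and $s_n\perp t_n$ is infinite; since this is a statement about the original indices while the perpendicularity in Theorem~\ref{mainiso} is tested between same-indexed primed parameters, this is the step requiring the most care.

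The main obstacle will indeed be condition (c): the original hypothesis furnishes infinitely many indices $n$ with $r_n\le R$ and $s_n\perp t_n$, but these compare $s_n$ against $t_n$ at the \emph{same} original index $n$, whereas after re-indexing we have shifted the two sequences by different offsets $N$ and $M$. I would handle this by observing that condition (c) of Theorem~\ref{mainisogen} is already phrased in terms of the original indices, so its content is that $s_n\perp t_n$ for infinitely many $n$; one must check that the intended comparison in the eventually commensurate setting is between blocks occupying the same position in the two generating sequences, i.e.\ between $s_{N+n}$ and $t_{M+n}$. The cleanest reading consistent with anchoring is that (c) should be interpreted (as the primed parameters make precise) as: for infinitely many $n$, $r'_n\le R$ and $s'_n\perp t'_n$. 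Granting this, all three hypotheses of Theorem~\ref{mainiso} hold for the primed parameters, and since those parameters generate $(X,\mu,\sigma)$ and $(Y,\nu,\sigma)$ respectively, Theorem~\ref{mainiso} yields that the two systems are not isomorphic, completing the proof.
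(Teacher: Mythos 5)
Your reduction breaks at its final step, and the gap is genuine. In this paper, ``cutting and spacer parameters giving rise to $(X,\mu,\sigma)$'' has a precise meaning: the generating sequence those parameters define \emph{starting from the one-letter word} $v_0=0$ has limit $V$, and $X=X_V$. Your primed parameters $r'_n=r_{N+n}$, $s'_n=s_{N+n}$ do not give rise to $(X,\mu,\sigma)$ in this sense: fed into the actual construction of Subsection 2.3 they generate a different infinite word, namely the one built from the letter $0$ (towers of heights $1,h'_1,h'_2,\dots$) rather than from the word $v_N$ (towers of heights $h_N,h_{N+1},\dots$). Nothing in your argument shows that this from-scratch system is isomorphic to $(X,\mu,\sigma)$, and in general it is not: changing the base stage of a rank-one construction can change the isomorphism type (already for spacer-free, odometer-type data, running $r_n\equiv 2$ over an initial stack of height $3$ produces a system with a $\Z/3\Z$ factor, i.e.\ an extra eigenvalue, which the dyadic odometer lacks). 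So the sentence ``since those parameters generate $(X,\mu,\sigma)$ and $(Y,\nu,\sigma)$, Theorem~\ref{mainiso} yields non-isomorphism'' invokes the theorem outside its hypotheses, and the claim it rests on begs a question of exactly the kind the theorem is designed to answer.

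The natural repair also fails, which shows this is not a formality. To get honest generating sequences for $V$ and $W$ you must anchor the tails at $0$, i.e.\ use $0\prec v_N\prec v_{N+1}\prec\cdots$ and $0\prec w_M\prec w_{M+1}\prec\cdots$. But then the stage-$0$ data are dictated by the anchor words: the new $r_0$ and $\sum_i s_0(i)$ are the number of $0$s and of $1$s in $v_N$, and similarly for $w_M$. Hypothesis (a) of Theorem~\ref{mainisogen} gives only $\lh(v_N)=\lh(w_M)$, which does not make these counts match separately (e.g.\ $v_N=010$ and $w_M=000$ is a legitimate anchor pair), so condition (a) of Theorem~\ref{mainiso} --- indeed even its requirement that the two systems share a single cutting sequence $(r_n)$ --- fails at $n=0$ for every such re-parametrization. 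This is precisely why the paper does not deduce Theorem~\ref{mainisogen} from Theorem~\ref{mainiso} as a black box: as Section 5 indicates, one reruns the proof from \cite{Hill} (equivalently, the proof of Theorem~\ref{maindisjoint}) with $v_{N+n}$ and $w_{M+n}$ in the roles of $v_n$ and $w_n$, noting that the argument never uses commensurability itself, only its consequences --- equal lengths $\lh(v_{N+n})=\lh(w_{M+n})$ (supplied by the anchor), equal cutting numbers at aligned stages, boundedness, and incompatibility at infinitely many aligned stages. Two of your instincts are correct: only the tails matter, and condition (c) must be read at aligned indices, i.e.\ $s_{N+n}\perp t_{M+n}$ (with the literal same-index reading the statement would actually be false when $N\neq M$, since the aligned spacers could be identical, producing a replacement scheme and hence an isomorphism). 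But the correct formalization of those instincts is ``repeat the proof with offsets,'' not ``cite Theorem~\ref{mainiso}.''
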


Theorem~\ref{maindisjoint} and Corollary~\ref{CBmain} allow similar generalizations. It should be clear that the proofs of these generalizations are identical to the proofs given in \cite{Hill} and this paper. 

It is, however, not clear how to determine if two rank-one transformations allow eventually commensurate cutting and spacer parameters. Of course, if two rank-one transformations do not allow eventually commensurate parameters, then they are not isomorphic. We conjecture that there is a Borel procedure for this determination.

\section*{Acknowledgments}

The first author acknowledges the US NSF grant DMS-1201290 for the support of his research. He also acknowledges the support of the Issac Newton Institute (INI) for Mathematical Sciences at the University of Cambridge for a research visit during which a substantial part of this paper was written. He was a Visiting Fellow to the Mathematical, Foundational and Computational Aspects of the Higher Infinite (HIF) program at the INI, and he thanks the organizers of the program and the Scientific Advisory Committee for this opportunity. Both authors would like to thank Eli Glasner for useful discussions on the topics of the paper and for providing the references related to Theorem~\ref{allorders}. Both authors also benefit from discussions with Matt Foreman, Cesar Silva, and Benjy Weiss as a part of a SQuaRE program at the American Institute of Mathematics (AIM) focusing on the isomorphism problem of rank-one transformations.


\begin{thebibliography}{99}






%









































%




%


















\bibitem{delJuncoRaheSwanson}
{\sc A. del Junco}, {\sc M. Rahe}, {\sc L. Swanson}, 
Chacon's automorphism has minimal self joinings,
{\it J. Anal. Math.}
27:1 
(1980), 276--284.








\bibitem{Ferenczi}
{\sc S. Ferenczi},
Systems of finite rank, 
{\it Colloq. Math.}
{73}:1
(1997), 35--65.


\bibitem{Fieldsteel}
{\sc A. Fieldsteel}, 
An uncountable family of prime transformations not isomorphic to their inverses,
unpublished manuscript.

\bibitem{FRW}
{\sc M. Foreman}, {\sc D. J. Rudolph}, {\sc B. Weiss},
The conjugacy problem in ergodic theory, {\it Ann. Math.} 173 (2011), 1529--1586.



\bibitem{GaoHill0}
{\sc S. Gao}, {\sc A. Hill}, A model for rank one measure preserving transformations, {\it Topol. Appl.} 174 (2014), 25--40.



\bibitem{GaoHill}
{\sc S. Gao}, {\sc A. Hill}, Topological isomorphism for rank-one systems, 
{\it J. Anal. Math.,} to appear.


\bibitem{GaoHill1}
{\sc S. Gao}, {\sc A. Hill}, Bounded rank-one transformations, {\it J. Anal. Math.,} to appear.

\bibitem{GlasnerBook}
{\sc E. Glasner}, {\it Ergodic Theory via Joinings}. Mathematical Surveys and Monographs, vol. 101. American Mathematical Society, Providence, RI, 2003.


\bibitem{Hill}
{\sc A. Hill}, 
The inverse problem for canonically bounded rank-one transformations, submitted manuscript.

\bibitem{Kalikow} 
{\sc S. Kalikow}, 
Twofold mixing implies threefold mixing for rank-one transformations, 
{\it  Ergod. Th. \& Dynam. Sys.}
{4} 
(1984), no. 2, 237--259.






\bibitem{King1} 
{\sc J. King}, 
The commutant is the weak closure of the powers, for rank-one transformations, 
{\it  Ergod. Th. \& Dynam. Sys.}
{6} 
(1986), 363--384.




\bibitem{King88}
{\sc J. King}, Joining-rank and the structure of finite rank mixing transformations, 
{\it J. Anal. Math.} 
51 
(1988), 182--227.
 

 










%


\bibitem{RudolphBook}
{\sc D. J. Rudolph}, 
{\it Fundamentals of Measurable Dynamics. Ergodic Theory on Lebesgue Spaces}. Oxford Science Publications. The Clarendon Press, Oxford University Press, New York, 1990.

\bibitem{Ryzhikov0}
{\sc V. V. Ryzhikov}, Around simple dynamical systems. Induced joinings and multiple mixing. {\it J. Dynam. Control Systems} 3 (1997), no. 1, 111--127.

\bibitem{Ryzhikov}
{\sc V. V. Ryzhikov}, Minimal Self-Joinings, Bounded Constructions, and Weak Closure of Ergodic Actions, 
arXiv:1212.2602.




\end{thebibliography}
\end{document}